\newtheorem{teo}{Theorem}
\newtheorem{cor}[teo]{Corollary}
\newtheorem{lem}[teo]{Lemma}
\newtheorem{prop}[teo]{Proposition}
\theoremstyle{definition}
\newtheorem{defn}[teo]{Definition}
\newtheorem{exem}{Example}
\newtheorem{obs}[teo]{Remark}
\newtheorem{theoremx}{Theorem}
\newcommand{\C}{\mathbb C}
\newcommand{\R}{\mathbb R}
\newcommand{\Z}{\mathbb Z}
\newcommand{\N}{\mathbb N}
\newcommand{\A}{\mathscr A}
\newcommand{\KS}{K(\mathcal{S})}
\newcommand{\sgs}{\mathcal{S}}
\newcommand{\dsp}{\displaystyle}
\newcommand{\xs}{X_{\mathcal{S}}}
\newcommand{\dku}{\Delta^1_k}
\newcommand{\dkd}{\Delta^2_k}
\newcommand{\cone}{\operatorname{Cone}}
\newcommand{\ks}{K(\mathcal{S})}
\newcommand{\ms}{\mathcal{S}}
\newcommand {\purple}[1]{{\color[rgb]{0.8,0.2,0.5} {#1}}}
\title{The Euler characteristic of Milnor fibers over 2-generic symmetric determinantal varieties}
\author{Tha\'is M. Dalbelo\footnote{Research supported by FAPESP-Grant 2019/21181-0, FAPESP-Grant 2024/22060-0 and by CNPq grant 403959/2023-3.}, Daniel Duarte\footnote{Research supported by SECIHTI project CF-2023-G-33.}, Danilo da N\'obrega Santos\footnote{Research supported by Edital PDSE no. 06/2024 and grant 88881.980859/2024-01.}}
\begin{document}

\maketitle

\begin{abstract}
In this work we present a formula for the Euler characteristic of the Milnor fiber of non-degenerate functions $f: X \to \mathbb{C}$ with isolated critical set relative to a stratification, where $X$ is a $2$-generic symmetric determinantal variety. The formula is obtained in two steps. Firstly, we explicitly describe the toric structure of those varieties. Secondly, we compute volumes of Newton polyhedra arising from the toric structure. The result then follows from Matsui-Takeuchi's formula for Milnor fibers over toric varieties. As an application, we compute the local Euler obstruction of $X$ at the origin and the local Euler obstruction of $f$. We also relate the Euler obstruction of $f$ to the Milnor number of a certain polynomial associated to $f$.
\end{abstract}



\section*{Introduction}
The Milnor fiber of a function is a powerful tool in the analysis of critical points and has a wide range of applications in various areas of mathematics. Its importance lies in the fact that it provides a detailed local description of the behavior of functions around critical points, enabling significant advances in the understanding and classification of such critical points \cite{Milnor}. 

Let $X$ be a subvariety of $\mathbb{C}^N$ and $f: X \to \mathbb{C}$ a non-constant regular function. It is well known that, on appropriate neighborhoods, $f$ defines a topological fiber bundle \cite{MR1404920}. The Milnor fiber of $f$ at 0, denoted by $F_0$, is any fiber of this fibration. 

In this work we compute the Euler characteristic of the  Milnor fiber of non-degenerate functions $f$ with isolated critical set, when $X$ is a $2$-generic symmetric determinantal variety. Recall that generic symmetric determinantal varieties are defined in terms of vanishing of minors of symmetric matrices (by $2$-generic symmetric determinantal varieties we mean those defined by the vanishing of $2 \times2$ minors). This special class of varieties have been extensively studied (see, for instance, \cite{Beelen, symmetric, Gaube, Harris, Kotzev}). Notice that $2$-generic symmetric determinantal varieties are also toric varieties. Hence, it is natural to ask about the semigroup that defines them. Our first result provides an explicit description of such semigroups.

\begin{theoremx}[see Theorem \ref{esttoric}]\label{theorem 1}
Let $n\in\N$, $n\geq2$. Denote as $S_n^2$ the corresponding $2$-generic symmetric determinantal variety. Then $S_n^2$ equals the toric variety defined by the semigroup in $\Z^n$ generated by the following set
	\begin{align*}
		\mathscr{A}=\{& e_1, e_1+e_2, e_1+e_3,\dots, e_1+e_{n-1}, e_1+e_n, \\
		& e_1+2e_2, e_1+e_2+e_3, e_1+e_2+e_4,\dots, e_1+e_2+e_{n-1},e_1+e_2+e_n,\\
		& e_1+2e_3, e_1+e_3+e_4,\dots, e_1+e_3+e_{n-1}, e_1+e_3+e_n,\\
		&\qquad \vdots\\
		& e_1+2e_{n-1},e_1+e_{n-1}+e_n,\\
		& e_1+2e_n\}.
	\end{align*}
\end{theoremx}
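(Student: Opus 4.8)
\emph{Setup and strategy.} Recall that $S_n^2$ is the variety of symmetric $n\times n$ complex matrices of rank at most $1$, that is, the common zero locus of the $2\times 2$ minors of the generic symmetric matrix $(x_{ij})_{1\le i\le j\le n}$. Since $\C$ is algebraically closed, every such matrix can be written as $vv^{\mathsf T}$ for some $v\in\C^n$, so $S_n^2$ is exactly the image of the parametrization $\phi\colon\C^n\to\C^{\binom{n+1}{2}}$, $\phi(v)=(v_iv_j)_{1\le i\le j\le n}$; in particular $S_n^2$ is irreducible of dimension $n$. On the toric side, by definition the affine toric variety $\xs$ attached to $\mathcal{S}$ is the Zariski closure of the image of the monomial map $\psi\colon(\C^*)^n\to\C^{\binom{n+1}{2}}$ whose coordinate functions are the characters $t\mapsto t^{a}$, $a\in\mathscr{A}$. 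The plan is to match the generators of $\mathscr{A}$ with the matrix entries, to verify that $\psi$ lands inside $S_n^2$, and then to conclude by irreducibility and a dimension count.

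\emph{Step 1: matching coordinates.} First I set up the bijection between $\mathscr{A}$ and the entries $x_{ij}$ dictated by the ordering in the statement: the $k$-th block of $\mathscr{A}$ corresponds to the entries $x_{kk},x_{k,k+1},\dots,x_{kn}$, so that $x_{11}\leftrightarrow e_1$, $x_{1j}\leftrightarrow e_1+e_j$ for $j\ge 2$, $x_{ii}\leftrightarrow e_1+2e_i$ for $i\ge 2$, and $x_{ij}\leftrightarrow e_1+e_i+e_j$ for $2\le i<j\le n$. A routine count confirms that $|\mathscr{A}|=\binom{n+1}{2}$, so this is indeed a bijection with the ambient coordinates.

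\emph{Step 2: the image of $\psi$.} Under this identification the symmetric matrix $\psi(t)$ has entries $t_1$, $t_1t_j$ and $t_1t_it_j$ (with $t_1t_i^2$ on the diagonal). The key observation, which I will check entrywise, is that $\psi(t)=t_1\,ww^{\mathsf T}$ where $w=(1,t_2,\dots,t_n)$; hence $\psi(t)$ has rank at most $1$ and, putting $v=(\sqrt{t_1},\sqrt{t_1}\,t_2,\dots,\sqrt{t_1}\,t_n)$, we obtain $\psi(t)=vv^{\mathsf T}=\phi(v)$. As $t$ ranges over $(\C^*)^n$ the vector $v$ ranges over all vectors with nonzero entries, so $\operatorname{im}\psi=\phi\big((\C^*)^n\big)\subseteq S_n^2$, and since $S_n^2$ is closed this gives $\xs=\overline{\operatorname{im}\psi}\subseteq S_n^2$.

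\emph{Step 3: conclusion, and the main obstacle.} Both $\xs$ and $S_n^2$ are irreducible, and I claim they have the same dimension $n$: for $S_n^2$ this is the parametrization by $v$, while $\dim\xs$ equals the rank of the lattice $\Z\mathcal{S}$, which is all of $\Z^n$ because $e_1$ and $e_1+e_j$ both lie in $\mathcal{S}$ and hence so does every $e_j$. An irreducible closed subvariety of the irreducible variety $S_n^2$ of the same dimension must coincide with it, giving $\xs=S_n^2$. The only genuinely delicate point is conceptual: the naive Veronese exponents $\{e_i+e_j\}$ span only the index-two sublattice $\{m\in\Z^n:\sum_k m_k\ \text{even}\}$, whereas $\mathcal{S}$ spans all of $\Z^n$, so the change of variables $t_1=v_1^2$, $t_j=v_j/v_1$ relating the two parametrizations is a degree-two isogeny of tori rather than an isomorphism. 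This changes the torus but not the toric variety, since an isogeny is surjective; framing Step 2 as the direct inclusion $\operatorname{im}\psi\subseteq S_n^2$ sidesteps the issue, and one need only record that this factor of two does not affect the identity of the varieties.
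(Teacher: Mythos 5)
Your proposal is correct, and it reaches the inclusion $X_{\mathcal{S}}\subseteq S_n^2$ by a genuinely different mechanism than the paper. The paper works ideal-theoretically: it labels the elements of $\mathscr{A}$ as $m_{ij}$ matching the matrix entries $x_{ij}$ and verifies, case by case, that every $2\times 2$ minor $x_{ij}x_{kl}-x_{il}x_{kj}$ corresponds to an additive relation $m_{ij}+m_{kl}=m_{il}+m_{kj}$ in the semigroup, giving $J_2\subseteq I_{\mathcal{S}}$ and hence $X_{\mathcal{S}}=V(I_{\mathcal{S}})\subseteq V(J_2)=S_n^2$. You instead argue geometrically: using the (standard, and equivalent to the paper's kernel definition) description of $X_{\mathcal{S}}$ as the closure of the image of the monomial map $\psi$, you factor $\psi(t)=t_1\,ww^{\mathsf T}$ with $w=(1,t_2,\dots,t_n)$, so the image consists of rank~$\leq 1$ symmetric matrices and lands in the closed set $S_n^2$ with no computation of relations at all. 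From that point on, the two arguments coincide: both observe $\mathbb{Z}\mathcal{S}=\mathbb{Z}^n$ to get $\dim X_{\mathcal{S}}=n$, quote $\dim S_n^2=n$ and irreducibility, and conclude equality. What each buys: your route is shorter and makes the geometry transparent (including the correct and genuinely subtle remark that the $vv^{\mathsf T}$ parametrization corresponds to a degree-two isogeny of tori, which changes the torus but not the variety); the paper's route has the advantage of explicitly exhibiting $J_2\subseteq I_{\mathcal{S}}$, which it reuses later --- e.g.\ the corollary identifying $\mathscr{A}$ as the minimal generating set rests on the equality of the prime ideals $J_2=I_{\mathcal{S}}$ and on the explicit quadratic form of the generators of $J_2$. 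Your only mild informality is asserting $\dim S_n^2=n$ directly from the parametrization $\phi(v)=vv^{\mathsf T}$ (generic fibers $\{\pm v\}$ are finite); this is fine, and the paper simply cites the literature for the same fact.
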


With the explicit set of generators $\A$ at hand, we are able to deduce several  properties of the semigroup $\N\A$. Moreover, we fully describe the faces of the cone generated by $\A$. As a surprising fact, we show that this cone exhibits a kind of \textit{fractal behaviour}: each of its faces also determines a $2$-generic symmetric determinantal variety (see Proposition \ref{propface}). These results are key to derive the claimed formula  for the Euler characteristic. 

The other main tool for that goal is the work of Matsui-Takeuchi \cite{matsui2011}. In \textit{op. cit.}, the authors provide a formula for the Euler characteristic of Milnor fibers on toric varieties in terms of volumes of Newton polyhedra. We are able to explicitly compute every element of Matsui-Takeuchi's formula in our context, leading to the following result.

\begin{theoremx}[see Theorem \ref{differentds}]\label{theorem 2}	Let $S^2_n\subset\mathbb{C}^N$ be a 2-generic symmetric determinantal variety. Let $f:S^2_n\longrightarrow\mathbb{C}$ be a non-degenerate polynomial function given by
	\[
	f(z)=\sum_{i=1}^{n}\alpha_i z_i^{d_i}+h(z),
	\]
where $\alpha_i \neq 0$ and $d_i\geq1$ for all $1\leq i \leq n$, and $h(z)$ is a polynomial function satisfying $\Gamma_+(h) \subset \Gamma_+(\sum_{i=1}^{n}z_i^{d_i})$ and $\Gamma_+(h)$ denotes the Newton polyhedron of $h$. Then the Euler characteristic of the Milnor fiber $F_0$ of $f$ in $0\in S_n^2$ is given by
	\[
	\chi(F_0)=\sum_{k=1}^n(-1)^{k-1}2^{k-1}\sum_{1\leq i_1<\dots <i_k\leq n}^{}d_{i_1} \dots d_{i_k}.
	\]
In particular, this formula holds for non-degenerate polynomial functions on $S^2_n$ with isolated critical point at the origin relative to a stratification.
\end{theoremx}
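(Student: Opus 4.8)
The plan is to feed the toric description of Theorem \ref{theorem 1} into Matsui--Takeuchi's formula \cite{matsui2011}, which computes the Euler characteristic of the Milnor fiber at the torus-fixed point of an affine toric variety as an alternating sum, over the torus orbits, of normalized volumes of Newton polyhedra. First I would pin down the cone. Every generator in $\mathscr{A}$ has first coordinate $1$, so $\sigma^\vee := \mathrm{Cone}(\mathscr{A}) \subset \R^n$ is the cone over its height-one slice; a short convexity check identifies that slice with the simplex $\mathrm{conv}(0, 2e_2, \dots, 2e_n)$, since $e_1 + e_j$ and $e_1 + e_i + e_j$ are midpoints of edges and $2$-faces. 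Thus $\sigma^\vee$ is simplicial, with extreme rays $\rho_1 = \R_{\ge 0} e_1$ and $\rho_j = \R_{\ge 0}(e_1 + 2e_j)$ for $2 \le j \le n$. Since $\Gamma_+(h) \subset \Gamma_+(\sum_i z_i^{d_i})$, adjoining $h$ leaves the Newton polyhedron unchanged, so $\Gamma_+(f) = \Gamma_+(\sum_i \alpha_i z_i^{d_i})$, and non-degeneracy lets Matsui--Takeuchi's formula apply to $f$.

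Next I would organize the sum over orbits. The torus orbits of $X_{\mathcal{S}}$ correspond to the faces of $\sigma^\vee$, an orbit having the same dimension as its face; as $\sigma^\vee$ is simplicial, its $k$-dimensional faces are the cones $\gamma_I = \mathrm{Cone}(\rho_i : i \in I)$ for subsets $I \subseteq \{1, \dots, n\}$ with $|I| = k$. Matsui--Takeuchi's formula then takes the shape
\[
\chi(F_0) = \sum_{\emptyset \neq I \subseteq \{1, \dots, n\}} (-1)^{|I| - 1}\, V_I,
\]
where $V_I$ denotes the normalized volume, relative to the lattice $\Z^n \cap \mathrm{span}(\gamma_I)$, of the Newton polytope that $f$ determines on the orbit closure $\overline{O_{\gamma_I}}$ — concretely, the convex hull of the origin together with the exponents of $f$ lying in $\mathrm{span}(\gamma_I)$. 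The zero-dimensional orbit $\{0\}$ contributes nothing, which is why the final sum starts at $k = 1$.

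The core of the argument is the identity $V_I = 2^{|I| - 1}\prod_{i \in I} d_i$. Labeling coordinates so that $z_i^{d_i}$ has exponent $d_1 e_1$ for $i = 1$ and $d_i(e_1 + 2e_i)$ for $i \ge 2$, exactly the monomials with $i \in I$ lie in $\mathrm{span}(\gamma_I)$, so the relevant polytope is the simplex on the origin and those exponents. When $1 \in I$ the spanning matrix is triangular in the basis $\{e_i : i \in I\}$, with determinant $d_1 \prod_{j \in I \setminus \{1\}} (2 d_j)$. When $1 \notin I$ the exponents form a standard corner simplex, but the relevant lattice $\Z^n \cap \mathrm{span}(\gamma_I) = \{\sum_{i \in I} a_i (e_1 + 2 e_i) : a_i \in \tfrac12 \Z,\ \sum_i a_i \in \Z\}$ contains the lattice spanned by the ray generators with index $2^{|I| - 1}$, and this index supplies the same factor. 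Verifying that the doubled generators (when $1 \in I$) and the lattice index (when $1 \notin I$) produce the identical factor $2^{|I| - 1}$, so that $V_I$ is symmetric in all the $d_i$, is the step I expect to demand the most care.

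Finally I would regroup by cardinality $k = |I|$, giving
\[
\chi(F_0) = \sum_{k = 1}^{n} (-1)^{k - 1} 2^{k - 1} \sum_{1 \le i_1 < \dots < i_k \le n} d_{i_1} \cdots d_{i_k},
\]
the asserted formula; the compact equivalent $\chi(F_0) = \tfrac12\bigl(1 - \prod_{i = 1}^{n}(1 - 2 d_i)\bigr)$ makes the product structure of the orbit contributions evident and serves as a consistency check, for instance against the direct $A_1$-singularity computation when $n = 2$.
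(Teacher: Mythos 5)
Your proposal is correct and follows the same overall architecture as the paper's proof: identify $S^2_n=X_{\mathcal{S}}$ via Theorem \ref{esttoric}, observe $\Gamma_+(f)=\Gamma_+(\sum_i\alpha_iz_i^{d_i})$, feed the simplicial face structure of $K(\mathcal{S})$ (Lemma \ref{lemrays}, Corollary \ref{corrays}) into Matsui--Takeuchi's formula, and evaluate one normalized volume per face, getting $2^{k-1}d_{i_1}\cdots d_{i_k}$ in all cases. The one place where you genuinely diverge is the delicate case of $k$-dimensional faces \emph{not} containing the ray $\mathbb{R}_{\geq0}e_1$: the paper computes the volume by writing the vertices in the explicit lattice basis $\beta=\{e_1+2e_{j_2},e_1+e_{j_2}+e_{j_3},\dots\}$ extracted from the proof of Proposition \ref{propface} (which in turn rests on the identification of faces with smaller varieties $S^2_k$), whereas you describe the lattice directly as $\{\sum_{i\in I}a_i(e_1+2e_i):a_i\in\tfrac12\mathbb{Z},\ \sum_ia_i\in\mathbb{Z}\}$ and compute the index $2^{|I|-1}$ of the sublattice spanned by the ray generators, then rescale the volume by that index. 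Your route is more self-contained and elementary (it does not need Proposition \ref{propface} at all), and it makes transparent why the two types of faces produce the identical factor $2^{k-1}$; the paper's route recycles structure it has already built. One point you should make explicit: Matsui--Takeuchi's formula computes volumes with respect to the lattice $\mathbb{Z}(\mathcal{S}\cap\Delta)$, and its identification with your lattice $\mathbb{Z}^n\cap\operatorname{span}(\gamma_I)$ uses the saturation $\mathcal{S}=\mathbb{R}_{\geq0}\mathscr{A}\cap\mathbb{Z}^n$, i.e.\ normality (Corollary \ref{cornorm}); this is true but deserves a sentence. Your closed form $\chi(F_0)=\tfrac12\bigl(1-\prod_{i=1}^n(1-2d_i)\bigr)$ is a nice consistency check that does not appear in the paper.
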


With this formula at hand, we derive the local Euler obstruction of $S^2_n$ and the local Euler obstruction of $f$. This provides a different proof for the computation of the Euler obstruction of $S^2_n$ already obtained in \cite{Raicu}. Finally, we provide a formula relating the Euler obstruction of $f$ to the Milnor number of a certain polynomial associated with $f$.

The paper is divided as follows. In the first section we collect the basics of toric geometry that we need as well as a discussion on Matsui-Takeuchi's formula. In Section 2 we prove Theorem \ref{theorem 1} and deduce several consequences of the semigroup defining $S^2_n$ and also of the cone it generates. Section 3 is devoted to proving Theorem \ref{theorem 2}. In the last section we compute the local Euler obstruction of $S_n^2$ and the local Euler obstruction of the function $f$ appearing in Theorem \ref{theorem 2}. We conclude by showing a relation among the local Euler obstruction of $f$ and the Milnor number of a function associated with $f$.

\section{A formula for the Euler characteristic on toric varieties}

We start by recalling the basics of toric geometry, followed by a discussion on a formula established by Matsui and Takeuchi for the Euler characteristic of the Milnor fiber of non-degenerate polynomial functions on toric varieties.

\subsection{Basic notions of toric geometry}

Let us recall the definition of a toric variety. A general treatment on toric geometry can be consulted in \cite{1996grobner,cox2011}.

\begin{defn}\label{toricvar}
Let $\mathcal{S}=\mathbb{N}\mathscr{A}\subset\mathbb{Z}^d$ be a semigroup finitely generated by $\mathscr{A}=\{m_1,\dots,m_s\}$. Consider the $\mathbb{C}$-algebra homomorphism
	\[
	\begin{array}{rccc}
		\pi_\mathcal{S} : & \mathbb{C}[z_1,\dots,z_s] & \longrightarrow & \mathbb{C}[x_1^\pm,\dots,x_d^\pm]\\
		&           z_i             &   \longmapsto & x^{m_i} \\
	\end{array}.
	\]
	Denote $I_\mathcal{S}=\mbox{ker}(\pi_\mathcal{S})$. The variety $X_\mathcal{S}=V(I_\mathcal{S})\subset\C^s$ is called a toric variety defined by $\mathcal{S}$. We denote as $\C[x^{\mathcal{S}}]$ the image of $\pi_{\mathcal{S}}$, which is the coordinate ring of $X_{\mathcal{S}}$. Moreover, we denote as $\C[\ms]$ the semigroup algebra generated by $\ms$. Notice that $\C[x^{\mathcal{S}}]\cong\C[\ms]$.
\end{defn}

Notice that we do not require a toric variety to be normal. It is a basic fact that $\xs$ is an irreducible variety having a dense open set isomorphic to an algebraic torus, whose action on itself extends to all of the variety. Its dimension is $\operatorname{rank}(\Z\A)$.

Normal toric varieties can also be characterized using rational polyhedral cones in $\R^d$, which are sets of the form
$$\cone(A)=\Big\{\sum_{u\in A}\lambda_uu\mid \lambda_u\in\R_{\geq0}\Big\},$$
where $A\subset\Z^d$ is finite. We refer to $\cone(A)$ simply as a cone. The dimension of a cone is the dimension of the real vector space it generates.

\begin{prop}\label{proptor}
Let $\mathcal{S}=\mathbb{N}\mathscr{A}\subset\mathbb{Z}^d$ be a semigroup finitely generated by $\mathscr{A}=\{m_1,\dots,m_s\}$. Then $X_\mathcal{S}$ is a normal variety if and only if $\mathcal{S}=\cone({\mathscr{A})}\cap\mathbb{Z}^d$.
\end{prop}

Given a cone $\sigma\subset\R^d$, its dual cone is defined as
$$\sigma^\vee=\{v\in\R^d \mid u\cdot v\geq0, \mbox{ for all }u\in\sigma\},$$
where $\cdot$ denotes the usual dot product in $\R^d$. It is also a cone.

A face of a cone $\sigma$ is any subset of the form $\sigma\cap H_v$, for some element $v\in\sigma^\vee$, where $H_v=\{u\in\R^d \mid u\cdot v=0\}$. A cone $\sigma$ is said to be strongly convex if $\{0\}$ is a face of $\sigma$.

\subsection{Matsui-Takeuchi's formula for the Euler characteristic}\label{MT formula}

Once we have established the basic notions on toric geometry that we need, we can now discuss a formula, established by Matsui and Takeuchi, for computing an Euler characteristic of Milnor fibers in the context of toric varieties.

Let $\mathcal{S}=\mathbb{N}\mathscr{A}\subset\mathbb{Z}^d$ be a semigroup finitely generated by $\mathscr{A}=\{m_1,\dots,m_s\}$. Denote as $\ks=\cone(\A)$. Recall that a polynomial function $f: X_{\mathcal{S}}\longrightarrow\C$ determines and is determined by an element of the semigroup algebra $\C[\ms]$, which is a sum of the form
$$f=\dsp\sum_{u\in\mathcal{S}}a_u\cdot u,\;\mbox{where}\; a_u\in\mathbb{C}.$$ 


\begin{defn}\cite{matsui2011}\label{newpoly}
	Let $f=\dsp\sum_{u\in\mathcal{S}}a_u\cdot u$ be a polynomial function on $X_\mathcal{S}$. 
	\begin{enumerate}
		\item[(i)] The support of $f$ is
		\[
		\mbox{supp}(f):=\{u\in\mathcal{S}: a_u\neq0 \}\subset\mathcal{S}.
		\]
		\item[(ii)] The Newton polyhedral $\Gamma_+(f)$ of $f$ is 
		\[
		\operatorname{Conv}\Big(\dsp\bigcup_{u\in\;\operatorname{supp}(f)}(u+K(\mathcal{S}))\Big),
		\] 
		 where $\operatorname{Conv}(\cdot)$ denotes the convex hull. Notice that $\Gamma_+(f)\subset\KS$.
	\end{enumerate}
\end{defn}

\begin{exem}
Let $\mathcal{S}$ be generated by $\mathscr{A}=\{e_1,e_1+2e_2,e_1+e_2\}\subset\mathbb{Z}^2$. Consider the polynomial function $f(z_1,z_2,z_3)=z_1^3+2z_2^3-z_3^3+4z_1^3z_3$. Thus, the corresponding element in the semigroup algebra $\mathbb{C}[\mathcal{S}]$ is 
	\[
	f=1\cdot(3e_1)+2\cdot(3e_1+6e_2)+(-1)\cdot(3e_1+3e_2)+4\cdot(4e_1+e_2),
	\]
	with $\mbox{supp}(f)=\{(3,0),(3,6),(3,3),(4,1)\}$ (see Figure \ref{figpoli}). 
\end{exem}

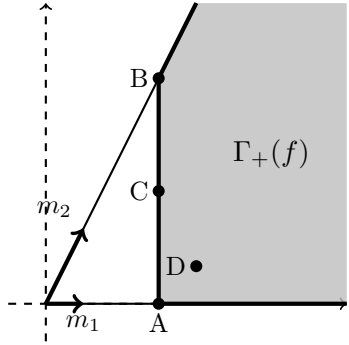
\begin{figure}[h]
	\centering
	\begin{tikzpicture}[scale=1]
	
			\draw[thick,dashed,->] (-0.5,0) -- (4,0);\draw[thick,dashed,->] (0,-0.5) -- (0,4);
	
			\draw[ultra thick,->] (0,0) -- (0.5,1)node[above left]{{\small$m_2$}};\draw[ultra thick,->] (0,0) -- (0.5,0)node[below]{{\small$m_1$}};

			\fill[lightgray, opacity=0.8] (4,0)--(1.5,0)--(1.5,3)--(2,4)--(4,4)--cycle;\fill[black] (1.5,0)node[below]{{\small A}} circle (0.8mm) ;\fill[black] (1.5,1.5)node[left]{{\small C}} circle (0.8mm) ;\fill[black] (1.5,3)node[left]{{\small B}} circle (0.8mm) ;\fill[black] (2,0.5)node[left]{{\small D}} circle (0.8mm) ; 
			
			\draw (3,2)node{$\Gamma_+(f)$};
	
			\draw[thick] (0,0) -- (2,4);\draw[thick] (0,0) -- (4,0);

			\draw[ultra thick]  (1.5,0) -- (4,0);\draw[ultra thick] (1.5,0) -- (1.5,3);\draw[ultra thick] (1.5,3) -- (2,4);

	\end{tikzpicture}
	\caption{Newton polyhedron of $f$.}
	\label{figpoli}
\end{figure}

For the remainder of this section we consider $f\in\mathbb{C}[\mathcal{S}]$ to be a polynomial function on $X_\mathcal{S}$ such that $0\not\in\mbox{supp}(f)$. To each $f=\sum_{u\in\mathcal{S}}a_u\cdot u\in\C[\sgs]$ we associate a Laurent polynomial 
\[
L(g)(x)=\sum_{u\in\mathcal{S}}a_ux^u
\]  
defined on $(\mathbb{C}^*)^d$. 

\begin{defn}\label{nondeg}
	Let $f=\sum_{u\in\mathcal{S}}a_u\cdot u\in\mathbb{C}[\mathcal{S}]$. We say that $f$ is non-degenerate if, for every compact face $\gamma\subset\Gamma_+(f)$, the hypersurface $L(f_\gamma)^{-1}(0)\subset(\mathbb{C}^*)^d$ is smooth and reduced, where $f_\gamma=\sum_{u\in\gamma\cap\mathcal{S}}a_u\cdot u$.
\end{defn}

In \cite{matsui2011} the authors provided a formula for computing the Euler characteristic of the Milnor fiber at $0\in X_\mathcal{S}$ for a non-degenerate polynomial function. In order to describe their formula some notation is required.

Let $f\in\C[\sgs]$. For each $k$-dimensional face $\Delta$ of the cone $\KS$ with $\Gamma_+(f)\cap\Delta\neq\emptyset$, denote by $\gamma^\Delta_1,\dots,\gamma^\Delta_{v(\Delta)}$ the compact faces of $\Gamma_+(f)\cap\Delta$ of dimension $k-1$.
Let $L(\Delta)$ be the linear subspace of $\mathbb{R}^n$ generated by $\Delta$ and let $\Gamma^\Delta_i$ be the convex hull of $\gamma^\Delta_i\sqcup\{0\}$ in $L(\Delta)$. 

\begin{teo}\cite[Corollary 3.5]{matsui2011}\label{matsui2011}
	Let $f=\dsp\sum_{u\in\mathcal{S}}a_u\cdot u\in\mathbb{C}[\mathcal{S}]$ be a non-degenerate polynomial function on $X_\mathcal{S}$. Then the Euler characteristic of the Milnor fiber $F_0$ of $f$ in $0\in X_\mathcal{S}$ is given by
	\begin{equation}\label{formjapa}
		\chi(F_0)= \sum_{\Gamma_+(f)\cap\Delta\neq\emptyset}(-1)^{\dim\Delta-1}\sum_{i=1}^{v(\Delta)}\mbox{Vol}_\mathbb{Z}(\Gamma^\Delta_i)
	\end{equation}
	where $\mbox{Vol}_\mathbb{Z}(\Gamma^\Delta_i)$ is the normalized $(\dim\Delta)$-dimensional volume of $\Gamma^\Delta_i$ with respect to the lattice $\Z(\mathcal{S}\cap\Delta)$. 
\end{teo}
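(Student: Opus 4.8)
The plan is to prove the formula by combining the basic relation between Milnor fibers and nearby cycles with the additivity of the Euler characteristic over the torus-orbit stratification (\ref{Wh strat}), reducing the global computation to a Varchenko-type volume count on each orbit. Throughout I will use the fact that, for complex analytic (or algebraic) sets, the ordinary and the compactly supported Euler characteristics coincide, so that $\chi$ is additive over locally closed decompositions.

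First I would reduce to a local sheaf-theoretic statement. By the standard identification of the Milnor fiber with the stalk of the nearby cycle complex, $\chi(F_0) = \chi\big((\psi_f \mathbb{C}_{X_\mathcal{S}})_0\big)$, where $\psi_f$ denotes the nearby cycle functor along $f$ and $\mathbb{C}_{X_\mathcal{S}}$ the constant sheaf. Equivalently, realizing $F_0 = X_\mathcal{S} \cap B_\epsilon \cap f^{-1}(\delta)$ for $0 < |\delta| \ll \epsilon \ll 1$ as an honest space, it suffices to compute $\chi(F_0)$ directly. Next I would split $F_0$ along the torus orbits: writing $X_\mathcal{S} = \bigsqcup_{\lambda \prec \KS} T_\lambda$ and intersecting with $F_0$, additivity of the Euler characteristic gives
$$\chi(F_0) = \sum_{\lambda \prec \KS} \chi\big(F_0 \cap T_\lambda\big),$$
so the problem becomes computing, for each face $\lambda$ (equivalently each face $\Delta$ of $\KS$ of dimension $k = \dim\lambda$), the Euler characteristic of the slice $F_0 \cap T_\lambda = T_\lambda \cap B_\epsilon \cap f^{-1}(\delta)$ sitting inside the $k$-dimensional torus $T_\lambda \cong (\mathbb{C}^*)^k$.

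On each orbit I would then run a Kouchnirenko--Varchenko argument. The restriction $f|_{T_\lambda}$ is a Laurent polynomial whose relevant (small) part near $0 \in \overline{T_\lambda}$ is governed by the compact faces of the trace $\Gamma_+(f) \cap \Delta$; the non-degeneracy hypothesis of Definition \ref{nondeg} guarantees that the face-hypersurfaces $L(f_\gamma)^{-1}(0)$ are smooth and reduced, which is exactly what makes the local Euler characteristic computable by volumes. Concretely, the $(k-1)$-dimensional compact faces $\gamma_1^\Delta,\dots,\gamma_{v(\Delta)}^\Delta$ and their cones $\Gamma_i^\Delta$ over the origin give $\chi(F_0 \cap T_\lambda) = (-1)^{\dim\Delta - 1}\sum_{i=1}^{v(\Delta)}\mathrm{Vol}_{\mathbb{Z}}(\Gamma_i^\Delta)$, the normalized volume being taken with respect to the sublattice $\mathbb{Z}(\mathcal{S}\cap\Delta)$ intrinsic to the orbit. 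Faces $\Delta$ with $\Gamma_+(f)\cap\Delta = \emptyset$ contribute nothing. Summing over all $\Delta$ then yields (\ref{formjapa}).

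The main obstacle is this last step: establishing that the local contribution of each orbit is exactly the lattice volume, with the correct normalization relative to $\mathbb{Z}(\mathcal{S}\cap\Delta)$ rather than the ambient lattice. I would handle this either by choosing a projective toric resolution of $X_\mathcal{S}$ whose fan refines both $\KS$ and the normal fan of $\Gamma_+(f)$, so that $f$ becomes locally normal crossing, and then applying an A'Campo-type formula; or by an induction on $\dim\lambda$ that tracks how the nearby cycles propagate across the boundary strata where lower-dimensional orbits meet. In both routes, non-degeneracy is precisely the hypothesis ensuring that no spurious boundary contributions arise and that each volume records the exact Euler-characteristic increment of its orbit.
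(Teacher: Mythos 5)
The paper does not actually prove this theorem: it is imported verbatim, with citation, from \cite[Corollary 3.5]{matsui2011}, so there is no in-paper argument to compare against. Your outline is essentially the strategy of Matsui--Takeuchi's original proof: pass to nearby cycles, use additivity of $\chi$ over the orbit stratification $\mathcal{V}_\mathcal{S}$ of $X_\mathcal{S}$ (faces $\Delta$ with $\Gamma_+(f)\cap\Delta=\emptyset$ contributing zero because $f$ vanishes identically on $\overline{T_\Delta}$), and then compute each orbit's contribution by a Kouchnirenko--Varchenko type volume formula for non-degenerate Laurent polynomials, where $\mathbb{Z}(\mathcal{S}\cap\Delta)$ is the correct lattice because it is the character lattice of the torus $T_\Delta$. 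The only caveat is that the orbit-wise identity $\chi(F_0\cap T_\Delta)=(-1)^{\dim\Delta-1}\sum_{i}\mbox{Vol}_\mathbb{Z}(\Gamma^\Delta_i)$, which you defer to a toric resolution refining the normal fan of $\Gamma_+(f)$ plus an A'Campo-type formula, is precisely the technical core of the cited paper rather than a routine step; your proposed route is the standard and correct one, but a complete write-up would have to carry it out.
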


Recall that $\mbox{Vol}_\mathbb{Z}(\cdot)$ can be computed as follows. Let $\Delta$ be a face of the cone $\KS$, and let $L(\Delta)$ be the linear subspace it generates. Then, $\Z(\mathcal{S}\cap\Delta)\subset L(\Delta)$ is a $\mathbb{Z}$-lattice whose rank is $\dim L(\Delta)=\dim\Delta$. We have $\R(\mathcal{S}\cap\Delta)=L(\Delta)$. Fix a $\mathbb{Z}$-basis of $\Z(\mathcal{S}\cap\Delta)$, and let $\mbox{vol}(\cdot)$ denote the Lebesgue measure on $(L(\Delta), \Z(\mathcal{S}\cap\Delta))$, for which the volume of ($\dim\Delta$)-dimensional standard cube, in this measure, is equal to $1$. For a subset $P\subset L(\Delta)$, we have
	\begin{equation}\label{vol1}
		\mbox{Vol}_\mathbb{Z}(P)=(\dim\Delta)!\cdot\mbox{vol}(P).
	\end{equation}
For more details, see \cite{MTEuler,matsui2011}.
	
On the other hand, assume that $\Delta$ is $k$-dimensional. If $P\subset L(\Delta)\cong\R^k$ is a simplex with vertices $x_0,x_1,\dots,x_k\in\mathbb{R}^k$ then
$$\{x_0-x_i,x_1-x_i,\dots,x_{i-1}-x_i,x_{i+1}-x_i,\dots,x_k-x_i\}$$
is a basis of $\mathbb{R}^k$. In this case $\mbox{vol}(P)$ is given by
\begin{equation}\label{vol2}
	\mbox{vol}(P)=\dfrac{\mid\det(x_0-x_i\,\cdots\,\widehat{x_i-x_i}\,\cdots\,x_k-x_i)\mid}{k!},
\end{equation}
where $(x_0-x_i\,\cdots\,\widehat{x_i-x_i}\,\cdots\,x_k-x_i)$ is the ($k\times k$)-matrix whose columns are the vectors $x_j-x_i$ for $j\neq i$. Since the set in question forms a basis for $\R^k$, the value of $\mbox{vol}(P)$ is independent of the choice of $x_i$. For more details, see \cite[Chapter 7]{cox2005using}. From (\ref{vol1}) and (\ref{vol2}) we obtain
\begin{equation}\label{vol3}
	\mbox{Vol}_{\Z}(P)=\mid\det(x_0-x_i\,\cdots\,\widehat{x_i-x_i}\,\cdots\,x_k-x_i)\mid.
\end{equation}

\section{Toric structure of 2-generic symmetric determinantal varieties}\label{sect symm det}

In this section we describe the toric structure of 2-generic symmetric determinantal varieties. We do so by providing an explicit set of vectors whose associated toric variety coincides with the 2-generic symmetric variety. Moreover, we provide a detailed description of the cone generated by that set of vectors.

Let $S_n$ be the $\mathbb{C}$-vector space of symmetric matrices of order $n$ with entries in $\mathbb{C}$. Denote by $S_n^t$ the subset of $S_n$ consisting of symmetric matrices of rank less than $t$, where $2\leq t \leq n$. The variety $S_n^t$ is called a \emph{generic symmetric determinantal variety}.

In other words, let $L=(x_{ij})_{1\leq i,j\leq n}$, where $x_{ij}$ are indeterminates and $x_{ij}=x_{ji}$ for all $i,j$. Denote by $J_t \subset \mathbb{C}[x_{ij} : 1 \leq i,j \leq n]$ the ideal generated by the $(t \times t)$-minors of $L$. Then $S_n^t = V(J_t) \subset \mathbb{C}^N$, where $N = \dfrac{n(n+1)}{2}$.

The following are some basic properties of generic symmetric determinantal varieties.

\begin{prop}\label{propsim}
	Let $S_n^t$, with $2\leq t\leq n$, be a generic symmetrical determinantal variety. Then
	\begin{enumerate}
		\item $S_n^t$ is irreducible. Moreover, $J_t$ is a prime ideal.
		\item The dimension of $S_n^t$ is $N-\dfrac{(n-t+1)(n-t+2)}{2}$.
		\item The singular locus of $S_n^t$ is $S_n^{t-1}$.
		\item $S_n^t$ is normal.
	\end{enumerate}
\end{prop}
\begin{proof}
	These are known facts on generic symmetric determinantal varieties. See \cite[Proposition 1.1]{symmetric} for 2, 3, and the first statement of 1. For the second statement of 1, see \cite[Section 1, Corollary]{Kutz}.  The normality of $S_n^t$ is a consequence of 3 and the fact that $S_n^t$ is Cohen-Macaulay \cite[Theorem 1]{Kutz}.
\end{proof}

Because of 1 of the proposition, $S^2_n$ is a toric variety. We are interested in describing the toric structure that carries this particular case. 

\subsection{A semigroup defining the 2-generic symmetric determinantal varieties}

Our first goal is to explicitly describe a finitely generated semigroup $\mathcal{S}$ satisfying $S^2_n=\xs$. The following example illustrates the main ideas we use to prove that result.

\begin{exem}
	Consider $S^2_3=V(f_1,f_2,f_3,f_4,f_5,f_6)\subset\mathbb{C}^6$, where $f_1=z_1z_4-z_2^2$, $f_2=z_1z_5-z_2z_3$, $f_3=z_4z_6-z_5^2$, $f_4=z_2z_6-z_3z_5$, $f_5=z_1z_6-z_3^2$ and $f_6=z_2z_5-z_3z_4$. The binomials $f_1,\ldots,f_6$ correspond to the $(2\times2)$-minors of the following matrix
	$$
	\left(
	\begin{array}{ccc}
		z_1 & z_2 & z_3  \\
		z_2 & z_4 & z_5  \\
		z_3 & z_5 & z_6
	\end{array}
	\right).
	$$
	By Proposition \ref{propsim}, $\dim S^2_3=3$. Let $\mathcal{S}\subset\mathbb{Z}^3$ be the semigroup generated by $\mathscr{A}=\{m_i\}_{1\leq i\leq 6}$, where
	\begin{align*}
		m_1 & =e_1,\\
		m_2 &= e_1+e_2,\\
		m_3 &= e_1+e_3,\\
		m_4 &= e_1+2e_2,\\
		m_5 &= e_1+e_2+e_3,\\
		m_6 &= e_1+2e_3.
	\end{align*}
	Here $e_1,e_2,e_3$ denote the canonical basis of $\Z^3$. Since $\mathbb{Z}\mathscr{A}=\mathbb{Z}^3$, we have $\dim X_\mathcal{S}=3$. Furthermore, a straightforward computation shows that $f_1,\ldots,f_6\in I_\mathcal{S}$. Therefore, $X_\mathcal{S}\subset S^2_3$. As these are irreducible varieties of the same dimension, we conclude that $X_\mathcal{S}=S^2_3$.
\end{exem}

The following result was inspired by a similar treatment for 2-generic determinantal varieties, see \cite[Theorem 2.7]{DDR}.

\begin{teo}\label{esttoric}
	Let $n\geq2$ and let $\mathcal{S}\subset\mathbb{Z}^n$ be the semigroup generated by the following set
	\begin{align*}
		\mathscr{A} = \{& e_1, e_1+e_2, e_1+e_3,\dots, e_1+e_{n-1}, e_1+e_n, \\
		& e_1+2e_2, e_1+e_2+e_3, e_1+e_2+e_4,\dots, e_1+e_2+e_{n-1},e_1+e_2+e_n,\\
		& e_1+2e_3, e_1+e_3+e_4,\dots, e_1+e_3+e_{n-1}, e_1+e_3+e_n,\\
		&\qquad \vdots\\
		& e_1+2e_{n-1},e_1+e_{n-1}+e_n,\\
		& e_1+2e_n\}.                        
	\end{align*}
	Then $X_\mathcal{S}=S^2_n$.
\end{teo}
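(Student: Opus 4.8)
The plan is to follow the strategy of the worked example for $n=3$: exhibit the explicit monomial parametrization attached to $\mathcal{S}$, verify that it annihilates every $2\times 2$ minor of the generic symmetric matrix, and then upgrade the resulting inclusion $X_\mathcal{S}\subseteq S_n^2$ to an equality by comparing the dimensions of two irreducible varieties.

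First I would fix the bijection between the $N=n(n+1)/2$ coordinates $z_1,\dots,z_N$ of $\mathbb{C}^N$ and the entries $x_{ij}$ (with $1\leq i\leq j\leq n$) of the generic symmetric matrix $L$, matched to the order in which the generators appear in $\mathscr{A}$: the generator $e_1$ corresponds to $x_{11}$, the generators $e_1+e_j$ ($2\leq j\leq n$) to the remaining first-row entries $x_{1j}$, and the generators $e_1+e_i+e_j$ ($2\leq i\leq j\leq n$) to the entries $x_{ij}$. A direct count confirms that these are exactly the $N$ elements of $\mathscr{A}$. The crucial bookkeeping step is to write the associated monomials uniformly as $\pi_\mathcal{S}(x_{ab})=x_1\,\tilde{x}_a\,\tilde{x}_b$, where $\tilde{x}_1:=1$ and $\tilde{x}_a:=x_a$ for $a\geq 2$; this single formula absorbs the special role of the index $1$ and is what makes the minor computation transparent.

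Next I would settle the two dimensions and then the inclusion. By Proposition \ref{propsim}(2) with $t=2$ one gets $\dim S_n^2 = N-\tfrac{(n-1)n}{2}=n$. On the toric side, since $e_1\in\mathscr{A}$ and $e_1+e_j\in\mathscr{A}$ for all $j$, each $e_j=(e_1+e_j)-e_1$ lies in $\mathbb{Z}\mathscr{A}$, so $\mathbb{Z}\mathscr{A}=\mathbb{Z}^n$ and Proposition \ref{proptor}(2) gives $\dim X_\mathcal{S}=n$ as well. For the inclusion $J_2\subseteq I_\mathcal{S}$, I would check it on the generators of $J_2$, the $2\times 2$ minors $x_{ij}x_{kl}-x_{il}x_{kj}$ coming from rows $i,k$ and columns $j,l$ (read with the symmetric convention $x_{ab}=x_{\min(a,b)\max(a,b)}$). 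Applying the uniform formula yields
$$\pi_\mathcal{S}(x_{ij}x_{kl}-x_{il}x_{kj})=x_1^2\,\tilde{x}_i\tilde{x}_j\tilde{x}_k\tilde{x}_l-x_1^2\,\tilde{x}_i\tilde{x}_l\tilde{x}_k\tilde{x}_j=0,$$
since the product $\tilde{x}_i\tilde{x}_j\tilde{x}_k\tilde{x}_l$ depends only on the multiset $\{i,j,k,l\}$. Hence every minor lies in $\ker\pi_\mathcal{S}=I_\mathcal{S}$, giving $X_\mathcal{S}\subseteq V(J_2)=S_n^2$. As $S_n^2$ is irreducible (Proposition \ref{propsim}(1)) and $X_\mathcal{S}$ is irreducible (Proposition \ref{proptor}(1)), both of dimension $n$, the inclusion forces $X_\mathcal{S}=S_n^2$.

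I expect the only real obstacle to be organizational rather than mathematical, namely arranging the correspondence between the listed generators and the matrix entries so that the first row and column, where the index $1$ behaves differently from the others, do not force a proliferation of cases. The convention $\tilde{x}_1=1$ dissolves this difficulty, after which the vanishing of each minor reduces to the one-line cancellation above, and the irreducibility-plus-equal-dimension argument closes the proof.
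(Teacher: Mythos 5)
Your proposal is correct and follows essentially the same route as the paper: establish $J_2\subseteq I_\mathcal{S}$ via the correspondence $x_{ab}\leftrightarrow m_{ab}$, then conclude $X_\mathcal{S}=S_n^2$ from irreducibility and equality of dimensions. The only difference is cosmetic: your convention $\tilde{x}_1=1$ collapses into one line the four-case verification of the additive relations $m_{ij}+m_{kl}=m_{il}+m_{kj}$ that the paper carries out explicitly.
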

\begin{proof}
	Let $L=(x_{ij})_{1\leq i,j\leq n}$ be a symmetric matrix, where $x_{ij}$ are indeterminates. Let $J_2$ be the ideal generated by the $(2\times2)$-minors of $L$, i.e., 
	\[
	J_2=\{x_{ij}x_{kl}-x_{il}x_{kj}:1\leq i<k\leq n, 1\leq j<l\leq n\;\mbox{and}\;x_{ab}=x_{ba}\}.
	\] 
	We denote the elements of $\mathscr{A}$ as follows,
	\begin{itemize}
		\item $m_{11}=e_1$;
		\item $m_{1j}=e_1+e_j$, for $2\leq j\leq n$;
		\item $m_{ij}=e_1+e_i+e_j$, for $2\leq i\leq j\leq n$;
		\item $m_{ij}=m_{ji}$, for $2\leq i\leq j\leq n$.
	\end{itemize} 
	We claim that the multiplicative relation \(x_{ij}x_{kl} = x_{il}x_{kj}\) for \(1 \leq i < k \leq n\), \(1 \leq j < l \leq n\), corresponds to the additive relation \(m_{ij} + m_{kl} = m_{il} + m_{kj}\). To verify this, we consider the following cases:
	\begin{enumerate}
		\item $i=j=1$. By definition, $m_{kl}=m_{lk}$. Hence we can assume $k\leq l$. It follows that
		\begin{align*}
			m_{11}+m_{kl} & = e_1+(e_1+e_k+e_l)\\
			&	= (e_1+e_l)+(e_1+e_k)\\
			& = m_{1l}+m_{k1}.		
		\end{align*}
		\item $i\geq 2$ and $j=1$. As in the previous item we obtain
		\begin{align*}
			m_{i1}+m_{kl} & = (e_1+e_i)+(e_1+e_k+e_l)\\
			& = (e_1+e_i+e_l)+(e_1+e_k)\\
			& = m_{il}+m_{k1}.
		\end{align*}
		\item $i=1$ and $j\geq2$. As before,
		\begin{align*}
			m_{1j}+m_{kl} & =(e_1+e_j)+(e_1+e_k+e_l)\\
			& =(e_1+e_l)+(e_1+e_k+e_j)\\
			& = m_{1l}+m_{kj}.
		\end{align*}
		\item $i\geq2$ and $j\geq2$. As before,
		\begin{align*}
			m_{ij}+m_{kl} & = (e_1+e_i+e_j)+(e_1+e_k+e_l)\\
			& = (e_1+e_i+e_l)+(e_1+e_k+e_j)\\
			& = m_{il}+m_{kj}.
		\end{align*}
	\end{enumerate}
It follows that $J_2\subset I_{\mathcal{S}}$. Hence, $X_\mathcal{S}=V(I_\mathcal{S})\subset V(J_2)=S_n^2$. Since $\Z\A=\mathbb{Z}^n$, we have that $\dim X_\mathcal{S}=n$. We also know that $\dim S^2_n=n$ by Proposition \ref{propsim}. As \(X_\mathcal{S}\) and \(S^2_n\) are both irreducible varieties of the same dimension, we conclude that $X_\mathcal{S}=S^2_n$.  
\end{proof}

\begin{cor}\label{cornorm}
$\xs$ is a normal variety. In particular, $\mathcal{S}=\cone(\A)\cap\Z^{n}$. Moreover, $\cone(\A)$ is a strongly convex cone.
\end{cor}
\begin{proof}
	The first part follows from Theorem \ref{esttoric} and Propositions \ref{proptor} and \ref{propsim}. The second part follows from the fact $0\in S^2_n=\xs$.
\end{proof}

\begin{cor}
	$\A$ is the minimal generating set of $\mathcal{S}$.
\end{cor}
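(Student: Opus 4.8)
The plan is to exploit the fact that every generator in $\A$ lies on the affine hyperplane $\{x_1=1\}$, which endows $\mathcal{S}$ with a positive grading. Concretely, I would consider the linear functional $\deg\colon\Z^n\to\Z$ given by $\deg(x)=x_1$ (the first coordinate). Every element of $\A$ satisfies $\deg(m)=1$: this is clear for $e_1$ and for $e_1+e_j$, and for $e_1+e_i+e_j$ with $i,j\geq2$ the first coordinate is again $1$. Since $\mathcal{S}=\N\A$, additivity of $\deg$ shows that any $s\in\mathcal{S}$ written as $s=\sum_{m\in\A}c_m m$ satisfies $\deg(s)=\sum_{m}c_m$; in particular $\deg(s)\geq1$ for every nonzero $s$, and $\deg(s)=0$ forces $s=0$.

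First I would recall the standard structural fact: a pointed (equivalently, reduced) finitely generated affine semigroup possesses a unique minimal generating set, namely its set of \emph{irreducible} elements, where $m\in\mathcal{S}\setminus\{0\}$ is irreducible if it cannot be written as $m=a+b$ with $a,b\in\mathcal{S}\setminus\{0\}$ (see, e.g., \cite{1996grobner}). By Corollary \ref{cornorm} the cone $\R_{\geq0}\A$ is strongly convex, so $\mathcal{S}$ is pointed and this fact applies. It therefore suffices to prove that $\A$ is exactly the set of irreducible elements of $\mathcal{S}$.

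The grading makes both inclusions immediate. If some $m\in\A$ could be written as $m=a+b$ with $a,b\in\mathcal{S}\setminus\{0\}$, then $1=\deg(m)=\deg(a)+\deg(b)\geq 1+1=2$, a contradiction; hence every element of $\A$ is irreducible. Conversely, let $s\in\mathcal{S}$ be irreducible and write $s=\sum_{m\in\A}c_m m$ with $c_m\in\N$. Since $s\neq0$ we have $\sum_m c_m=\deg(s)\geq1$, and if $\sum_m c_m\geq2$ one could split the sum into two nonzero summands lying in $\mathcal{S}$, contradicting irreducibility. Thus $\sum_m c_m=1$, so $s$ equals a single generator, i.e.\ $s\in\A$. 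This proves that $\A$ coincides with the set of irreducibles, which is the unique minimal generating set. (In particular no element of $\A$ is a nonnegative combination of the others, since the same degree count gives $1=\sum c_m$ forcing a single term and hence an element of $\A$ distinct from $m$, impossible as the elements of $\A$ are distinct.)

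I expect no serious obstacle: once the first-coordinate grading is recognized, every step is a one-line degree count, and the only external input is the standard uniqueness of the minimal generating set of a pointed affine semigroup, whose hypothesis is supplied by Corollary \ref{cornorm}. The single point requiring care is the inequality $\deg(s)\geq1$ for all nonzero $s$, which rests precisely on the observation that all generators have strictly positive degree.
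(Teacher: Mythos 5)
Your proof is correct, but it follows a genuinely different route from the paper's. The paper argues geometrically: it invokes the fact that $\dim_{\C}T_0\xs$ equals the cardinality of the minimal generating set $\mathcal{H}$ (citing \cite[Lemma 1.3.10]{cox2011}), then uses primality of $J_2$ and $I_{\mathcal{S}}$ to identify the two ideals, observes that the Jacobian matrix of the quadratic binomial generators $x_{ij}x_{kl}-x_{il}x_{kj}$ vanishes at the origin, hence $T_0\xs=\C^N$, so $|\mathcal{H}|=N=|\A|$; combined with $\mathcal{H}\subset\A$ this forces $\mathcal{H}=\A$. You instead exploit the grading $\deg(x)=x_1$, under which every element of $\A$ has degree exactly $1$, and run a two-line degree count to show that $\A$ is precisely the set of irreducible elements of the pointed semigroup $\mathcal{S}$, hence its unique minimal generating set. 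Your argument is more elementary and self-contained: it needs no tangent-space lemma, no primality of $J_2$, and no Jacobian computation (indeed, even the appeal to Corollary \ref{cornorm} for pointedness is dispensable, since the positive grading itself shows $\mathcal{S}\cap(-\mathcal{S})=\{0\}$, and your closing parenthetical already reproves minimality and uniqueness directly). What the paper's heavier route buys is a geometric byproduct of independent interest: the Zariski tangent space of $S_n^2$ at the origin is all of $\C^N$, i.e., the embedding dimension at $0$ is maximal, which is not visible from the purely combinatorial degree count.
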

\begin{proof}
Let $\sigma\subset\R^{n}$ be the dual cone of $\cone(\A)$. Since $\cone(\A)$ is a full-dimensional and strongly convex cone, $\sigma$ also satisfy these properties. The previous corollary shows that $\xs$ is the normal toric variety defined by $\sigma$. 
	
Let $\mathcal{H}$ be the minimal generating set of $\mathcal{S}$. Let $T_0\xs\subset\C^{N}$ be the tangent space of $\xs$ at the origin. It is known that $\dim_{\C}T_0\xs=|\mathcal{H}|$ \cite[Lemma 1.3.10]{cox2011}. We claim that $T_0\xs=\C^{N}$. This implies $|\mathcal{H}|=N$. Since $\mathcal{H}\subset\A$ and $|\A|=N$ we conclude $\mathcal{H}=\A$.
	
To prove the claim recall that $J_2$ and $I_{\mathcal{S}}$ are prime ideals. Hence, $J_2=\mathbb{I}(S^2_n)=\mathbb{I}(\xs)=I_{\mathcal{S}}$. Since the generators of $J_2$ are of the form $x_{ij}x_{kl}-x_{il}x_{kj}$, we have that the Jacobian matrix of $J_2$, evaluated at $0$, is the zero matrix. In particular, its kernel is $\C^{N}$, proving the claim.
\end{proof}

\subsection{Some properties of the cone generated by $\mathcal{S}$}

We keep the notation from Theorem \ref{esttoric}. Denote $K(\mathcal{S})= \operatorname{Cone}(\A)$. As mentioned earlier, $K(\mathcal{S})$ is a strongly convex cone in $\mathbb{R}^n$ and  $\dim K(\mathcal{S}) = n$. In the following results we study the cone structure of $\KS$.

\begin{lem}\label{lemrays}
	Let $\mathscr{A}$ and $\mathcal{S}$ be as in Theorem \ref{esttoric}. The vectors in $\A$ of the form $e_1$ and $e_1 + 2e_j$, for $2 \leq j \leq n$, generate the rays of $K(\mathcal{S})$. In particular, $\KS$ is a simplicial cone.
\end{lem}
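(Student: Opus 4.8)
The plan is to prove the statement in two stages: first, to show that the cone $\KS$ is already generated by the $n$ vectors $e_1$ and $e_1+2e_j$ ($2\le j\le n$); second, to verify that these $n$ vectors are linearly independent, which forces the cone to be simplicial and identifies precisely these vectors as spanning its rays.

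For the first stage, recall that $\KS=\operatorname{Cone}(\A)$, so it suffices to exhibit each remaining generator of $\A$ as a nonnegative combination of the candidate rays. The two remaining families are $e_1+e_j$ for $2\le j\le n$ and $e_1+e_i+e_j$ for $2\le i<j\le n$, and these are handled by the identities
\begin{align*}
e_1+e_j &= \tfrac12\,e_1 + \tfrac12\,(e_1+2e_j),\\
e_1+e_i+e_j &= \tfrac12\,(e_1+2e_i) + \tfrac12\,(e_1+2e_j).
\end{align*}
Both are convex combinations with nonnegative coefficients, so every element of $\A$ lies in $\operatorname{Cone}(e_1, e_1+2e_2,\dots,e_1+2e_n)$, giving the equality $\KS=\operatorname{Cone}(e_1,e_1+2e_2,\dots,e_1+2e_n)$.

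For the second stage, I would arrange the $n$ candidate vectors as the rows of an $n\times n$ matrix; subtracting the first row from each of the others yields a triangular matrix with diagonal entries $1,2,\dots,2$, so the determinant equals $2^{n-1}\neq0$ and the vectors are linearly independent. A cone generated by $n$ linearly independent vectors in $\R^n$ is full-dimensional and simplicial, and its extreme rays are exactly the rays $\R_{\ge0}v$ spanned by each generator $v$; in particular no generator is redundant, since otherwise it would be a nonnegative---hence a genuine linear---combination of the others, contradicting independence. This establishes that $e_1$ and $e_1+2e_j$ generate the rays of $\KS$ and that $\KS$ is simplicial.

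The computations here are elementary; the only point requiring care is the claim that the listed vectors are \emph{exactly} the extreme rays (no more and no fewer), which is precisely what the combination of the inclusion in the first stage and the linear independence in the second stage delivers.
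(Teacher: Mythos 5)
Your proof is correct, and its first half is literally the paper's: the identities $e_1+e_i=\tfrac12\,e_1+\tfrac12\,(e_1+2e_i)$ and $e_1+e_i+e_j=\tfrac12\,(e_1+2e_i)+\tfrac12\,(e_1+2e_j)$ are exactly the ones used there to dispose of the remaining generators of $\A$. Where you diverge is in proving that the surviving $n$ vectors really are extreme rays. The paper does this by hand, exhibiting explicit supporting hyperplanes: linear forms $L_1$ and $L_j$ that vanish on $e_1$ (respectively on $e_1+2e_j$) and are strictly positive on every other generator, so that $\KS\cap\{L_1=0\}=\R_{\geq0}\,e_1$ and $\KS\cap\{L_j=0\}=\R_{\geq0}(e_1+2e_j)$. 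You instead compute the determinant $2^{n-1}\neq 0$ to get linear independence and then appeal to the general fact that a cone spanned by $n$ linearly independent vectors in $\R^n$ is simplicial with extreme rays exactly the rays through its generators. Both routes are sound. Yours is shorter and more conceptual, and the determinant $2^{n-1}$ is a pleasant byproduct (it is the same normalized-volume factor that resurfaces in the computations of Theorem \ref{differentds}); its one compressed step is the cited structural fact, which you justify only by the remark that no generator is redundant --- the cleanest complete argument is that your cone is the image of the orthant $\R^n_{\geq 0}$ under the invertible linear map sending the standard basis to your generators, so its faces are precisely the subcones spanned by subsets of the generators. The paper's hyperplane construction is more pedestrian but entirely self-contained, and the same device is redeployed in Corollary \ref{corrays} to exhibit all higher-dimensional faces, so within the paper's architecture nothing is wasted by doing it explicitly.
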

\begin{proof}
	Firstly, the vectors in $\A$ of the form $e_1 + e_i + e_j$,  for $2\leq i<j\leq n$, do not generate rays. Indeed,
	$$
	e_1+e_i+e_j=\dfrac{1}{2}(e_1+2e_i)+\dfrac{1}{2}(e_1+2e_j).
	$$
	Similarly, $e_1+e_i$, for $2\leq i\leq n$, are not rays of $\KS$ since
	$$
	e_1+e_i=\dfrac{1}{2}e_1 +\dfrac{1}{2}(e_1+2e_i).
	$$
	Now let $x=(x_1,\dots,x_n)\in\mathbb{R}^n$ and consider the linear maps $L_1: \mathbb{R}^n\longrightarrow\mathbb{R}$ and $L_j: \mathbb{R}^n\longrightarrow\mathbb{R}$, $2 \leq j \leq n$, defined as
	$$
	L_1(x)=\displaystyle\sum_{2\leq i\leq n}\!\!\!x_i 
	\;\;\;\mbox{and}\;\;\;
	L_j(x) =2x_1-x_j+\displaystyle\sum_{\begin{array}{c}
			\scriptstyle 2\leq i\leq n \\[0cm]
			\scriptstyle i\neq j\;\; 
	\end{array}}\!\!\!\!\! x_i,\;\mbox{for}\; 2\leq j\leq n.
	$$
	The following properties hold,
	\begin{itemize}
		\item $L_1(e_1)=0$,
		\item $L_1(e_1+e_j)>0$ for $2\leq j\leq n$,
		\item $L_1(e_1+e_i+e_j)>0$ for $2\leq i\leq j\leq n$,
		\item $L_j(e_1+2e_j)=0$,
		\item $L_j(e_1)>0$,
		\item $L_j(e_1+e_i)>0$ for $2\leq i \leq n$,
		\item $L_j(e_1+e_i+e_j)>0$ for $2\leq i<j\leq n$.        
	\end{itemize}
	By the previous items, $H_1=\{L_1=0\}$ and $H_j=\{L_j=0\}$ are supporting hyperplanes of $\KS$, and $\KS\cap H_1=\R_{\geq0}e_1$ and $\KS\cap H_j=\R_{\geq0}(e_1+2e_j)$. This proves the lemma.
\end{proof}

\begin{cor}\label{corrays}
	Let $\mathscr{A}$ and $\mathcal{S}$ be as in Theorem \ref{esttoric}, and let $0\leq k \leq n$. The cone $K(\mathcal{S})$ has exactly $\binom{n}{k}$ $k$-dimensional faces.
\end{cor}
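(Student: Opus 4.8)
The plan is to derive this purely as a combinatorial consequence of Lemma \ref{lemrays}. That lemma already tells us that $\KS$ is a simplicial cone and identifies its rays: they are generated by the $n$ vectors
\[
v_1 := e_1, \qquad v_j := e_1 + 2e_j \quad (2 \leq j \leq n).
\]
First I would record that these $n$ generators are linearly independent. This is consistent with $\dim \KS = n$ (so that a simplicial full-dimensional cone in $\R^n$ must have exactly $n$ rays), but it can also be checked directly: writing the $v_i$ as the rows of an $n\times n$ matrix and subtracting the first row from each of the others produces an upper-triangular matrix with diagonal entries $1, 2, \dots, 2$, hence determinant $2^{n-1}\neq 0$.

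The heart of the argument is the standard structural fact about simplicial cones: if a cone is generated by linearly independent vectors $v_1,\dots,v_n$, then its faces are exactly the subcones $\operatorname{Cone}(\{v_i : i \in T\})$ indexed by subsets $T \subseteq \{1,\dots,n\}$, and the dimension of such a face equals $|T|$. I would either cite this from the toric-geometry references already used in the paper (e.g.\ \cite{cox2011}) or sketch it: because the $v_i$ are independent, each subset $T$ spans a $|T|$-dimensional cone, distinct subsets give distinct cones, and every face of $\KS$ arises this way as the intersection of $\KS$ with a supporting hyperplane selecting a subset of the generators. The supporting hyperplanes $H_1, H_j$ constructed in the proof of Lemma \ref{lemrays} already exhibit the codimension-one faces concretely, and intersecting these gives the lower-dimensional faces.

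With this dictionary in hand, the count is immediate: the $k$-dimensional faces of $\KS$ are in bijection with the $k$-element subsets $T \subseteq \{1,\dots,n\}$, of which there are exactly $\binom{n}{k}$. I would note that the extreme cases fit the formula: $k=0$ corresponds to the empty subset, i.e.\ the apex $\{0\}$, giving $\binom{n}{0}=1$, and $k=n$ corresponds to the full set, i.e.\ $\KS$ itself, giving $\binom{n}{n}=1$.

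I do not expect a genuine obstacle here, since the result is a direct reading of Lemma \ref{lemrays} through the combinatorics of simplicial cones. The only point requiring a little care is to make the faces-correspond-to-subsets statement rigorous rather than merely invoked; depending on how self-contained the authors want the paper, the cleanest route is to cite the correspondence for simplicial cones and simply perform the binomial count.
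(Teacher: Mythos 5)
Your proposal is correct in its main route and genuinely differs from the paper's proof in how the work is distributed. The paper does not invoke the general dictionary between faces of a simplicial cone and subsets of its generators; instead it proves exactly that statement by hand for $\KS$: for each subset of $R(\KS)$ it exhibits an explicit supporting functional ($L_{\tau}(x)=\sum_{i\notin\{1,i_2,\dots,i_k\}}x_i$ for subsets containing $e_1$, and $L_{\tau'}(x)=2x_1-\sum_{i\in\{j_1,\dots,j_k\}}x_i$ for subsets not containing $e_1$), then counts the two types separately and finishes with $\binom{n-1}{k-1}+\binom{n-1}{k}=\binom{n}{k}$, closing with the same injectivity observation you make (a face spanned by more than $k$ of the independent rays has dimension $>k$). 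Your approach gets the count in one stroke as the number of $k$-subsets of the $n$ rays. What the citation buys is brevity; what the paper's hands-on construction buys is the concrete two-type description (\ref{tp1})--(\ref{tp2}) of the faces, which is not a throwaway: it is exactly what the proofs of Proposition \ref{propface} and Theorem \ref{differentds} use later, where the volume computations are organized according to whether a face contains the ray $\R_{\geq0}e_1$ or not. So if your proof replaced the paper's, those downstream arguments would still need the explicit functionals or generators.

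One detail in your fallback sketch is wrong and you should not lean on it. The hyperplanes $H_1=\{L_1=0\}$ and $H_j=\{L_j=0\}$ of Lemma \ref{lemrays} satisfy $\KS\cap H_1=\R_{\geq0}e_1$ and $\KS\cap H_j=\R_{\geq0}(e_1+2e_j)$: they cut out the \emph{one-dimensional} faces, not the codimension-one faces, so intersecting them yields only $\{0\}$ and cannot produce the intermediate faces. If you want a self-contained argument rather than a citation, the clean fix is the dual-basis construction: letting $u_1,\dots,u_n$ be dual to the independent generators $v_1,\dots,v_n$ (so $\langle u_i,v_j\rangle=\delta_{ij}$), each $u_i$ is nonnegative on $\KS$, and for any subset $T$ the functional $\sum_{i\notin T}u_i$ vanishes on $\KS$ exactly along $\operatorname{Cone}(v_i:i\in T)$, exhibiting every subset as a face. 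This is precisely the general pattern that the paper's $L_{\tau}$ and $L_{\tau'}$ instantiate concretely.
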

\begin{proof}
	The only face of $\KS$ of dimension $n$ is $\KS$ itself. Since $\KS$ is strongly convex, $\{0\}$ is a 0-dimensional face. By the previous lemma, there are $n$ 1-dimensional faces. Now assume $1<k<n$. We first construct $\binom{n}{k}$ $k$-dimensional faces of $\KS$. By Lemma \ref{lemrays}, the generators of the rays of $\KS$ are
	$$R(\KS)=\{e_1,e_1+2e_2,e_1+2e_3,\ldots,e_1+2e_n\}.$$
	
	Let $\tau$ be the cone generated by the following vectors, where $2\leq i_2<\cdots<i_k \leq n$,
	\begin{eqnarray}
		e_1, e_1+2e_{i_2},\dots,e_1+2e_{i_k}. \label{tp1}
	\end{eqnarray}
	These vectors are linearly independent, hence $\dim \tau=k$. We now show that $\tau$ is a face of $\KS$. Consider the linear map $L_{\tau}:\mathbb{R}^n\longrightarrow\mathbb{R}$ given by $L_{\tau}(x)=\sum_{i\not\in\{1,i_2,\dots,i_k\}}x_i$. A straightforward computation shows that $L_{\tau}(x)=0$ for $x\in\tau$ and $L_{\tau}(x)>0$ for $x\in K(\mathcal{S})\setminus\tau$. Thus, $\tau$ is a face of $\KS$. Notice that there are $\binom{n-1}{k-1}$ such possible faces, corresponding to the possible choices as in (\ref{tp1}).
	
	Now let $\tau'$ be the cone generated by the following vectors, where $2\leq j_1<\cdots<j_k \leq n$,
	\begin{eqnarray}
		e_1+2e_{j_1},\dots,e_1+2e_{j_k}. \label{tp2}
	\end{eqnarray}
	We proceed as in the previous paragraph to show that $\tau'$ is a $k$-dimensional face of $\KS$. These vectors are linearly independent, hence $\dim \tau'=k$. Consider the linear map $L_{\tau'}:\mathbb{R}^n\longrightarrow\mathbb{R}$, $L_{\tau'}(x)=2x_1-\sum_{i\in\{j_1,\dots,j_{k}\}}x_i$. As before, $L_{\tau'}(x)=0$ for $x\in\tau'$ and $L_{\tau'}(x)>0$ for $x\in K(\mathcal{S})\setminus\tau'$. Notice that there are $\binom{n-1}{k}$ such possible faces, corresponding to the possible choices as in (\ref{tp2}).
	
	Finally, counting all possible combinations (\ref{tp1}) and (\ref{tp2}) results in a total of
	$$
	\binom{n-1}{k-1}+\binom{n-1}{k}=\binom{n}{k}
	$$ 
	$k$-dimensional faces. 
	
	Notice that there are no other $k$-dimensional faces in $\KS$. Indeed, the previous paragraphs exhibited all $k$-dimensional faces generated by exactly $k$ rays. Let $\tau$ be a face of $\KS$ generated by more than $k$ rays. Since $R(\KS)$ is linearly independent, it follows that $\dim \tau>k$.
\end{proof}

\begin{prop}\label{propface}
	Let $2\leq k\leq n$ and let $\Delta_k$ be a $k$-dimensional face of $\KS$. Then $X_{\Delta_k}=S^2_k$.
\end{prop}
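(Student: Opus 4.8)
The plan is to reduce everything to Theorem \ref{esttoric} applied in dimension $k$. Write $\mathcal{S}_k\subset\Z^k$ for the semigroup of Theorem \ref{esttoric} with $n$ replaced by $k$, and $\mathscr{A}_k$ for its generating set, so that $X_{\mathcal{S}_k}=S^2_k$. Since $X_{\Delta_k}$ is the toric variety of the affine semigroup $\mathcal{S}\cap\Delta_k$, and since a $\Z$-linear isomorphism carrying one affine semigroup onto another induces an isomorphism of the corresponding semigroup algebras and hence of the toric varieties, it suffices to produce a lattice isomorphism from the group generated by $\mathcal{S}\cap\Delta_k$ onto $\Z^k$ that sends $\mathcal{S}\cap\Delta_k$ onto $\mathcal{S}_k$. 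Throughout I would use that $\mathcal{S}\cap\Delta_k=\Z^n\cap\Delta_k$, which follows from $\Delta_k\subseteq\KS$ together with the saturation of $\mathcal{S}$ (Corollary \ref{cornorm}). By the proof of Corollary \ref{corrays} every $k$-dimensional face is of one of the two types $\tau$ or $\tau'$ described there, and I would treat these separately.

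For a face $\tau$ of the first type, generated by $e_1,e_1+2e_{i_2},\dots,e_1+2e_{i_k}$ with $I=\{i_2,\dots,i_k\}$, the linear span of $\tau$ is the coordinate subspace $V=\mathrm{span}(e_1,e_{i_2},\dots,e_{i_k})$, so $\Z^n\cap V$ is freely generated by $e_1,e_{i_2},\dots,e_{i_k}$ and the relabeling $1\mapsto 1$, $i_l\mapsto l$ identifies it with $\Z^k$. Under this identification $\tau$ becomes $\mathrm{Cone}(e_1,e_1+2e_2,\dots,e_1+2e_k)$, which by Lemma \ref{lemrays} (in dimension $k$, where the cone is simplicial and thus equals the cone on its rays) is exactly $K(\mathcal{S}_k)$; invoking saturation once more gives $\Z^k\cap K(\mathcal{S}_k)=\mathcal{S}_k$. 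Hence $\mathcal{S}\cap\tau\cong\mathcal{S}_k$ and $X_\tau=S^2_k$. Equivalently, one checks directly that the generators of $\mathcal{S}$ lying in $\tau$ are precisely $e_1$, the $e_1+e_j$ with $j\in I$, and the $e_1+e_i+e_j$ with $i,j\in I$, which relabel exactly to $\mathscr{A}_k$.

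The genuinely delicate case is the second type $\tau'$, generated by $e_1+2e_{j_1},\dots,e_1+2e_{j_k}$ with $J=\{j_1<\dots<j_k\}\subseteq\{2,\dots,n\}$. Its span is $\{x: x_i=0\text{ for }i\notin\{1\}\cup J,\ \sum_{i\in J}x_i=2x_1\}$, and the crucial point is that $G:=\Z^n\cap\mathrm{span}(\tau')$ is \emph{not} the naive coordinate lattice: projecting to the $J$-coordinates identifies $G$ with the index-two sublattice $\{b\in\Z^k: b_1+\dots+b_k\text{ even}\}$, precisely because $x_1=\tfrac12\sum_{i\in J}x_i$ must be an integer. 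Under this projection the generators of $\mathcal{S}\cap\tau'$, namely the rays $e_1+2e_{j_l}$ and the vectors $e_1+e_{j_l}+e_{j_{l'}}$ for $l<l'$, become $2e_l$ and $e_l+e_{l'}$. The hard part will be resolving this factor-of-two discrepancy by an honest change of lattice: I would introduce $\psi(b)=(\tfrac12(b_1+\cdots+b_k),\,b_2,\dots,b_k)$ and verify that it restricts to a $\Z$-linear isomorphism $G\xrightarrow{\sim}\Z^k$, integrality holding exactly because the coordinate sums are even and surjectivity being immediate. A direct computation then shows $\psi(2e_1)=e_1$, $\psi(2e_l)=e_1+2e_l$ for $l\geq2$, $\psi(e_1+e_{l'})=e_1+e_{l'}$ for $l'\geq2$, and $\psi(e_l+e_{l'})=e_1+e_l+e_{l'}$ for $2\leq l<l'\leq k$, so that the generating set of $\mathcal{S}\cap\tau'$ is carried bijectively onto $\mathscr{A}_k$. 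Therefore $\mathcal{S}\cap\tau'\cong\mathcal{S}_k$ and $X_{\tau'}=S^2_k$. Since by Corollary \ref{corrays} every $k$-dimensional face is of type $\tau$ or $\tau'$, this settles all cases, and the index-two lattice identification in the second case is the only real obstacle.
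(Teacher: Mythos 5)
Your proof is correct, and in the decisive case it takes a genuinely different route from the paper's. Both arguments share the same skeleton: the dichotomy of $k$-dimensional faces from Corollary \ref{corrays}, and the faces containing the ray of $e_1$ handled by restriction to a coordinate subspace, Lemma \ref{lemrays} in dimension $k$, and saturation (Corollary \ref{cornorm}). The divergence is on the faces $\tau'$ spanned by $e_1+2e_{j_1},\dots,e_1+2e_{j_k}$. The paper never changes lattices: it introduces the set $\mathscr{A}^2_k$ of generators lying on $\tau'$, re-runs the argument of Theorem \ref{esttoric} for that set (checking that the $2\times2$-minor binomials lie in $I_{\N\mathscr{A}^2_k}$, computing $\operatorname{rank}(\Z\mathscr{A}^2_k)=k$ via the basis $\{e_1+e_2+e_j\}$, and concluding by irreducibility plus equality of dimensions), and only afterwards deduces that $\N\mathscr{A}^2_k$ is saturated, from the normality of $S^2_k$, in order to identify it with $\Z^{k+1}\cap\Delta^2_k$. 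You instead reduce directly to the already-proven Theorem \ref{esttoric} in dimension $k$: you identify $\Z^n\cap\mathrm{span}(\tau')$ with the index-two sublattice of $\Z^k$ of even coordinate sum, and your map $\psi$ is an explicit unimodular isomorphism carrying the cone $\tau'$ onto $K(\mathcal{S}_k)$, hence carrying $\mathcal{S}\cap\tau'=\Z^n\cap\tau'$ onto $\Z^k\cap K(\mathcal{S}_k)=\mathcal{S}_k$ by saturation on both sides. Your route is shorter and makes explicit where the factor of two lives; what it uses is the saturation of the face semigroup \emph{a priori} (legitimate, since $\tau'\prec\KS$ gives $\mathcal{S}\cap\tau'=\Z^n\cap\KS\cap\tau'=\Z^n\cap\tau'$), whereas the paper obtains the saturation of $\N\mathscr{A}^2_k$ only as a byproduct of its determinantal computation. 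One small reordering would tighten your write-up: you should not call $e_1+2e_{j_l}$ and $e_1+e_{j_l}+e_{j_{l'}}$ ``the generators of $\mathcal{S}\cap\tau'$'' before that is known; the clean order is that $\psi$ maps the pair (lattice, cone) onto $(\Z^k, K(\mathcal{S}_k))$, hence semigroup onto semigroup by saturation, and only then does it follow that those elements generate $\mathcal{S}\cap\tau'$, their images being exactly $\mathscr{A}_k$.
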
 
\begin{proof}
	By Corollary \ref{corrays}, $\Delta_k$ is generated by subsets of $R(\KS)$ as in (\ref{tp1}) or (\ref{tp2}). First consider subsets of the form (\ref{tp1}). Without loss of generality, we may assume that
	$$
	\Delta^1_k=\R_{\geq0}(e_1,e_1+2e_2,\dots,e_1+2e_k).
	$$
	Notice that $\Delta^1_k\subset\R^{k}\times\{(0,\ldots,0)\}$, where $(0,\ldots,0)\in\R^{n-k}$. Hence we assume $\dku\subset\R^k$. We claim that the semigroup $\dku\cap\Z^k$ has as generators the set $\A$ of Theorem \ref{esttoric} for $n=k$. By Lemma \ref{lemrays}, we know that $\KS=\dku$ in the case $n=k$. Then Corollary \ref{cornorm} implies the claim. We conclude that $X_{\dku}=S^2_k$.
	
	We now consider a $k$-dimensional face of $K(\mathcal{S})$ generated by vectors of the form (\ref{tp2}). Let
	$$
	\Delta^2_k=\R_{\geq0}(e_1+2e_2,\dots, e_1+2e_{k+1}).
	$$
	As before, we assume that $\dkd\subset\Z^{k+1}$. Let 
	$$\mathscr{A}^2_k=\{e_1+2e_2,\dots, e_1+2e_{k+1}\}\cup\{e_1+e_i+e_j\mid 2\leq i<j\leq k+1\}\subset\Z^{k+1}.$$
	As in the proof of Lemma \ref{lemrays}, we have that $\mathscr{A}^2_k\subset\dkd\cap\Z^{k+1}$. Moreover, $\dkd=\R_{\geq0}\mathscr{A}^2_k$. We claim that $X_{\N\mathscr{A}^2_k}=S^2_k$. Assuming the claim for the moment, we obtain that $\N\mathscr{A}^2_k$ is a saturated semigroup since $S^2_k$ is normal. Thus, $\Z^{k+1}\cap\dkd=\Z^{k+1}\cap\R_{\geq0}\mathscr{A}^2_k=\N\mathscr{A}^2_k$ and so $X_{\dkd}=X_{\N\mathscr{A}^2_k}=S^2_k$. Now we prove the claim.
	
	First notice that $|\mathscr{A}^2_k|=\binom{k}{2}+k=\frac{k(k+1)}{2}.$ In particular, both $S^2_k$ and $X_{\N\mathscr{A}^2_k}$ are embedded in $\C^{\frac{k(k+1)}{2}}$. 	Let $J_2 \subset \mathbb{C}[x_{ij} : 1 \leq i,j \leq k]$ be the ideal generated by the $(2 \times 2)$-minors of a $k\times k$ symmetric matrix $L$ whose entries are the variables $x_{ij}$.
	
	Denote the elements of $\mathscr{A}^2_k$ as follows, $m_{ij}=e_1+e_i+e_j$, for $2\leq i\leq j\leq k$. Notice that $m_{ji}=m_{ij}$. Analogous computations to those of the proof of Theorem \ref{esttoric} show that $J_2\subset I_{\N\mathscr{A}^2_k}$. Therefore, $X_{\N\mathscr{A}^2_k}\subset S^2_k$. Since these are irreducible varieties, to obtain that they are equal we show that $\operatorname{rank}(\Z\mathscr{A}^2_k)=k$.
	
	Define the set $\beta=\{e_1+e_2+e_j:2\leq j\leq k+1\}$.  We show that $\Z\beta=\Z\mathscr{A}^2_k$. This follows from
	$$e_1+2e_j=2(e_1+e_2+e_j)-(e_1+2e_2),\,\,\, \mbox{for }\,\,\, j>2, $$
	and
	$$e_1+e_i+e_j =(e_1+e_2+e_i)+(e_1+e_2+e_j)-(e_1+2e_2).$$
	Since $\beta$ is a linearly independent subset of $\mathbb{Z}^{k+1}$ and $|\beta|=k$ we conclude that $\operatorname{rank}(\Z\mathscr{A}^2_k)=k$.
\end{proof}


\section{The Euler characteristic of a Milnor fiber on $S_n^ 2$}

In this section we use the toric structure of $S_n^2$ to establish a formula for the Euler characteristic of the Milnor fiber of non-degenerate polynomial functions having an isolated critical point at the origin relative to a stratification. Our first goal is to describe explicitly such polynomials.

Let $X_\mathcal{S}\subset\mathbb{C}^N$ be a toric variety, where $\sgs=\N(\{m_1,\ldots,m_N\})\subset\Z^n$. Let $\KS=\cone(m_1,\ldots,m_N)$. Assume that $\ks$ is strongly convex. A Whitney stratification of $\xs$ is given by 
\begin{align}\label{Wh strat}
\mathcal{V}_\mathcal{S} = \{T_{\lambda}\}_{\lambda \prec \KS},
\end{align}
where ${\lambda \prec \KS}$ denotes a face of $\KS$ and $T_{\lambda}$ is the orbit of the action of the torus associated to $\lambda$.

Let $f:\xs\rightarrow\mathbb{C}$ be a polynomial function. The critical locus of $f$ relative to $\mathcal{V_{\ms}}$ is defined as the union 
	\begin{equation*}
		\Sigma_{\mathcal{V_{\ms}}}f=\bigcup_{T_{\lambda}\in\mathcal{V}_{\ms}}\Sigma(f|_{T_{\lambda}}),
	\end{equation*}
where $\Sigma(f|_{T_{\lambda}})$ is the set of critical points of $f|_{T_{\lambda}}$ (see \cite{MR1404920}, where this relative notion of critical points is introduced in a more general context).

Let us apply the previous concepts to the toric varieties $S^2_n$. Let $\mathscr{A}$ and $\mathcal{S}$ be as in Theorem \ref{esttoric}. By Lemma \ref{lemrays},
the vectors in $\A$ of the form $e_1$ and $e_1 + 2e_j$, for $2 \leq j \leq n$, generate the $n$ rays of $K(\mathcal{S})$. From now on we order the elements of $\mathscr{A}$ in the following way
$$\A = \{m_1, m_2, \dots, m_n, m_{n+1}, \dots, m_{N}\},$$ 
where $m_1=e_1$ and $m_j=e_1+2e_j$, for $2 \leq j \leq n$. 

\begin{lem}\label{lemsi}
Let $f:S^2_n \to \mathbb{C}$ be a polynomial function. If $\Sigma_{\mathcal{V_{\ms}}}f=\{0\}$, then $f$ must contain a pure monomial in the variable $z_i$, for all $1 \leq i \leq n$. In other words, $f$ contains terms of the form $ \alpha_i z_i^{d_i},$ where $\alpha_i \neq 0$ and $d_i \geq 1$, for all $1 \leq i \leq n$.
\end{lem}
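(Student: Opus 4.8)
The plan is to prove the contrapositive: if $f$ fails to contain a pure power of the variable $z_{i_0}$ for some $1 \le i_0 \le n$, then the critical locus $\Sigma_{\mathcal{V}_\mathcal{S}} f$ strictly contains $\{0\}$. The key geometric input is the orbit--cone correspondence for the affine toric variety $X_\mathcal{S} = S^2_n$, which makes the strata of \eqref{Wh strat} explicit in coordinates. Concretely, for a face $\lambda \prec \KS$ the orbit $T_\lambda$ equals $\{p \in X_\mathcal{S} : z_i(p) \neq 0 \iff m_i \in \lambda\}$, with $\dim T_\lambda = \dim \lambda$ \cite{cox2011}; in particular $T_{\{0\}} = \{0\}$, while the one-dimensional strata are exactly the orbits attached to the rays of $\KS$.

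First I would focus on the ray $\rho_{i_0} = \R_{\geq0} m_{i_0}$, which is a genuine face of $\KS$ by Lemma \ref{lemrays} (recall $m_1 = e_1$ and $m_j = e_1 + 2e_j$ for $2\le j\le n$). A short check shows that $m_{i_0}$ is the only element of $\mathscr{A}$ lying on $\rho_{i_0}$: every element of $\mathscr{A}$ has first coordinate $1$, so any positive multiple of $m_{i_0}$ belonging to $\mathscr{A}$ must equal $m_{i_0}$. By the description above, $T_{\rho_{i_0}} = \{(0,\dots,0,z_{i_0},0,\dots,0) : z_{i_0} \in \mathbb{C}^*\}$ is the punctured $z_{i_0}$-axis. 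Next I would compute the restriction $f|_{T_{\rho_{i_0}}}$: since $x^u$ vanishes on $T_{\rho_{i_0}}$ whenever $u \notin \rho_{i_0}$, and since $m_{i_0}$ is primitive (so the only semigroup elements on $\rho_{i_0}$ are the multiples $d\,m_{i_0}$), one obtains $f|_{T_{\rho_{i_0}}} = \sum_{d \geq 1} a_{d m_{i_0}} z_{i_0}^d$, the constant term being absent because $0 \notin \mbox{supp}(f)$. This is precisely the sum of the pure-power-of-$z_{i_0}$ terms of $f$.

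To finish, suppose $f$ contains no monomial $\alpha_{i_0} z_{i_0}^{d_{i_0}}$ with $d_{i_0} \geq 1$, i.e.\ $a_{d m_{i_0}} = 0$ for all $d \geq 1$. Then $f|_{T_{\rho_{i_0}}} \equiv 0$, so $d(f|_{T_{\rho_{i_0}}})$ vanishes identically and $\Sigma(f|_{T_{\rho_{i_0}}}) = T_{\rho_{i_0}}$. As $T_{\rho_{i_0}}$ is one-dimensional and disjoint from the origin, Definition \ref{criticallocus} forces $\Sigma_{\mathcal{V}_\mathcal{S}} f \supsetneq \{0\}$. Contrapositively, if $\Sigma_{\mathcal{V}_\mathcal{S}} f = \{0\}$ then $f$ must contain a pure power of each $z_i$, $1 \le i \le n$, proving the lemma. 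I expect the only delicate point to be the precise identification of the stratum $T_{\rho_{i_0}}$ with the punctured $z_{i_0}$-axis; once the orbit--cone dictionary is pinned down, everything else is a direct computation.
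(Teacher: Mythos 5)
Your proposal is correct and takes essentially the same approach as the paper: both arguments identify the punctured coordinate axes $T_i$ as the one-dimensional torus orbits (hence strata of $\mathcal{V}_{\mathcal{S}}$) corresponding to the rays of $K(\mathcal{S})$, and conclude that if $f$ lacks a pure power of $z_{i_0}$ then $f$ restricts to zero on $T_{i_0}$, forcing that one-dimensional stratum into $\Sigma_{\mathcal{V}_{\mathcal{S}}}f$, a contradiction. The only difference is presentational: you make explicit the orbit--cone dictionary and the computation that the semigroup elements on a ray are exactly the multiples of its primitive generator, steps the paper leaves implicit.
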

\begin{proof}
Consider the following sets:
$$T_i = \{ (0, \dots, 0, z_i, 0, \dots, 0)\in\C^N\mid \ \ z_i \neq 0, \ \ 1 \leq i \leq n\}.$$
Knowing that $m_1,\ldots,m_n$ determines the rays of $\KS$, from the defining equations of $S_n^2$ we deduce that $T_i\subset S_n^2.$

Recall the action of the torus $\theta: (\mathbb{C}^*)^n \times S_n^2 \to S_n^2 $, $\theta(t, z) = (t^{m_1} z_1, \dots, t^{m_N} z_N).$ It follows that the $T_i's$ are in fact the $1$-dimensional orbits of the action on $S_n^2$. In particular, the $T_i$'s are strata of the Whitney stratification $\mathcal{V}_{\mathcal{S}}$. Therefore, if $f$ does not contain a pure monomial in the variable $z_i$ for some $1 \leq i \leq n$, then the $1$-dimensional stratum $T_i$ is contained in $\Sigma_{\mathcal{V}_{\mathcal{S}}}f$, which is a contradiction.
\end{proof}

\begin{prop}\label{form polyn}
Let $f: S^2_n \to \mathbb{C}$ be a non-degenerate polynomial function such that $\Sigma_{\mathcal{V_{\ms}}}f=\{0\}$. Then $f$ has the form 
	\[
	f(z)=\sum_{i=1}^{n}\alpha_i z_i^{d_i}+h(z),
	\]
where $\alpha_i \neq 0$ and $d_i\geq1$ for all $1\leq i \leq n$, and $h(z)$ is a polynomial function satisfying $\Gamma_+(h) \subset \Gamma_+(\sum_{i=1}^{n}z_i^{d_i})$.    
\end{prop}
\begin{proof}
If the critical locus of $f$ relative to $\mathcal{V}_S$ is $\{0\}$, then by Lemma \ref{lemsi}, $f$ contains terms of the form $ \alpha_i z_i^{d_i},$ where $\alpha_i \neq 0$ and $d_i \geq 1$, for all $1 \leq i \leq n$. Notice that, for each $i$, there might be several terms of that form. We can write $f$ as follows, where each term $\alpha_i z_i^{d_i}$ corresponds to the minimal value of $d_i$, for each $i\in\{1, \dots, n\}$:
	\[
	f(z)=\sum_{i=1}^{n}\alpha_i z_i^{d_i}+h(z).
	\]
In the previous expression we have: $\alpha_i \neq 0$, $d_i\geq1$ for all $1\leq i \leq n$, and $h(z)$ is a polynomial function satisfying $\Gamma_+(h) \subseteq \Gamma_+(\sum_{i=1}^{n}{\alpha_i}z_i^{d_i})$. In other words, $\Gamma_+(f) = \Gamma_+\left(\sum_{i=1}^{n}{\alpha_i} z_i^{d_i}\right)$. Indeed, if $\Gamma_+(f) \cap \mathbb{R}_{+}(m_i) = \emptyset$, for some $i\in\{1, \dots, n\}$, it would imply that the associated $1$-dimensional orbit $T_i = \left\{ (0, \dots, 0, z_i, 0, \dots, 0) \in \mathbb{C}^N \mid z_i \neq 0,\ 1 \leq i \leq n \right\}$ is contained in the critical locus of $f$, which is a contradiction.	
\end{proof}


\subsection{A formula for the Euler characteristic of the Milnor fiber on $S^2_n$}

We are now ready to provide a formula for the Euler characteristic of the Milnor fiber of a non-degenerate polynomial function on $S^2_n$ having an isolated critical point at the origin relative to $\mathcal{V}_{\mathcal{S}}$.

\begin{teo}\label{differentds}
Let $S^2_n\subset\mathbb{C}^N$ be a 2-generic symmetric determinantal variety. Consider a non-degenerate polynomial function $f:S^2_n\longrightarrow\mathbb{C}$ given by 
	\[
	f(z)=\sum_{i=1}^{n}\alpha_i z_i^{d_i}+h(z),
	\]
where $\alpha_i \neq 0$ and $d_i\geq1$ for all $1\leq i \leq n$, and $h(z)$ is a polynomial function satisfying $\Gamma_+(h) \subset \Gamma_+(\sum_{i=1}^{n}z_i^{d_i})$. Then the Euler characteristic of the Milnor fiber $F_0$ of $f$ in $0\in S_n^2$ is given by
	\[
	\chi(F_0)=\sum_{k=1}^n(-1)^{k-1}2^{k-1}\sum_{1\leq i_1<\dots <i_k\leq n}^{}d_{i_1} \dots d_{i_k}.
	\]
In particular, this formula holds for non-degenerate polynomial functions on $S^2_n$ with isolated critical point at the origin relative to $\mathcal{V}_{\ms}$.
\end{teo}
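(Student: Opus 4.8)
The plan is to apply the Matsui--Takeuchi formula of Theorem \ref{matsui2011}, which is available because $S^2_n=X_{\mathcal{S}}$ by Theorem \ref{esttoric} and $f$ is non-degenerate. First I would record the Newton polyhedron of $f$. Writing $g=\sum_{i=1}^n\alpha_i z_i^{d_i}$, the hypothesis $\Gamma_+(h)\subset\Gamma_+(\sum_i z_i^{d_i})$ yields $\Gamma_+(f)=\Gamma_+(g)$, so only the pure powers matter. Under the isomorphism $z_i\mapsto x^{m_i}$, with $m_1=e_1$ and $m_j=e_1+2e_j$ for $2\le j\le n$, the monomial $z_i^{d_i}$ corresponds to the lattice point $d_i m_i\in\mathcal{S}$; hence the relevant support is $\{d_1m_1,\dots,d_nm_n\}$ and $\Gamma_+(f)=\operatorname{Conv}\big(\bigcup_{i=1}^n(d_im_i+K(\mathcal{S}))\big)$.

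Next I would organize the sum in (\ref{formjapa}) by faces. By Corollary \ref{corrays} the $k$-dimensional faces $\Delta$ of $K(\mathcal{S})$ are in bijection with the $k$-element subsets $I\subset\{1,\dots,n\}$: those with $1\in I$ are the faces of type (\ref{tp1}), those with $1\notin I$ are of type (\ref{tp2}), the set $I$ recording which ray generators $m_i$ span $\Delta$. The supporting functionals exhibited in Corollary \ref{corrays} vanish exactly on the generators $m_i$ with $i\in I$ and are strictly positive on the remaining $m_j$; since such a functional $\ell$ is nonnegative on $K(\mathcal{S})$ and the whole support lies in $K(\mathcal{S})$, decomposing a point of $\Gamma_+(f)$ along its defining generators shows $\Gamma_+(f)\cap\Delta=\operatorname{Conv}\big(\bigcup_{i\in I}(d_im_i+\Delta)\big)\neq\emptyset$. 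Its compact boundary (the union of the $\gamma^\Delta_\bullet$) is the $(k-1)$-simplex on $\{d_im_i:i\in I\}$; any monomials of $h$ landing on it merely subdivide it, and since the cones from $0$ over the pieces tile $\Gamma^\Delta:=\operatorname{Conv}(\{0\}\cup\{d_im_i:i\in I\})$, additivity of volume gives $\sum_{j}\mathrm{Vol}_{\Z}(\Gamma^\Delta_j)=\mathrm{Vol}_{\Z}(\Gamma^\Delta)$.

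The computational heart is to evaluate $\mathrm{Vol}_{\Z}(\Gamma^\Delta)$ via (\ref{vol3}), taking the cone point $0$ as base vertex, where the lattice is $\Z(\mathcal{S}\cap\Delta)$ identified by Proposition \ref{propface}. For a face of type (\ref{tp1}) with $I=\{1,i_2,\dots,i_k\}$ this lattice is the standard $\Z^k$, and the matrix with columns $d_1e_1,\,d_{i_2}(e_1+2e_{i_2}),\dots,d_{i_k}(e_1+2e_{i_k})$ is triangular with diagonal $d_1,2d_{i_2},\dots,2d_{i_k}$, so $\mathrm{Vol}_{\Z}(\Gamma^\Delta)=2^{k-1}\prod_{i\in I}d_i$. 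For a face of type (\ref{tp2}) with $I=\{j_1,\dots,j_k\}\subset\{2,\dots,n\}$, Proposition \ref{propface} provides the basis $\beta$; rewriting each $e_1+2e_{j}$ in this basis via $e_1+2e_j=2(e_1+e_2+e_j)-(e_1+2e_2)$ again produces a triangular matrix, now with diagonal $d_{j_1},2d_{j_2},\dots,2d_{j_k}$, whence $\mathrm{Vol}_{\Z}(\Gamma^\Delta)=2^{k-1}\prod_{i\in I}d_i$ as well. Thus every $k$-dimensional face contributes $2^{k-1}\prod_{i\in I}d_i$, depending only on its index set $I$.

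Finally I would assemble (\ref{formjapa}): since the $k$-dimensional faces range over all $k$-subsets $I$,
\[
\chi(F_0)=\sum_{k=1}^n(-1)^{k-1}\sum_{|I|=k}2^{k-1}\prod_{i\in I}d_i=\sum_{k=1}^n(-1)^{k-1}2^{k-1}\sum_{1\le i_1<\cdots<i_k\le n}d_{i_1}\cdots d_{i_k},
\]
which is the claimed identity. I expect the main obstacle to be the volume computation for the type (\ref{tp2}) faces: there the relevant lattice $\Z(\mathcal{S}\cap\Delta)$ is \emph{not} the standard one, so the determinant in (\ref{vol3}) must be taken in the basis $\beta$ of Proposition \ref{propface}, and one must verify that this change of basis indeed triangularizes the matrix. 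A secondary point is justifying that the lower-order term $h$ leaves each volume sum unchanged, which follows from additivity of volume under the subdivision it may induce on the compact boundary.
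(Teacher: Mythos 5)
Your proposal is correct and follows essentially the same route as the paper's proof: Matsui--Takeuchi's formula, the classification of $k$-dimensional faces from Corollary \ref{corrays}, identification of the unique compact $(k-1)$-dimensional face over each face $\Delta$ as the simplex on the scaled ray generators, and the determinant computation in the basis $\beta$ of Proposition \ref{propface} for faces of type (\ref{tp2}). The only cosmetic difference is your subdivision/additivity remark about monomials of $h$: since the $\gamma^\Delta_i$ are compact faces of the polyhedron $\Gamma_+(f)\cap\Delta$ (which depends only on the convex hull of the support), such monomials subdivide nothing, and one gets $v(\Delta)=1$ outright, exactly as the paper asserts.
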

\begin{proof}
The last statement of the Theorem follows from Proposition \ref{form polyn}.

Let $\A$ and $\sgs$ be as in Theorem \ref{esttoric}, so that $S^2_n=X_{\sgs}$. Let $\Gamma_+(f)$ be the Newton polyhedron of $f$. Since the polynomial $f$ is non-degenerate, by Theorem \ref{matsui2011}, we have
	\begin{equation*}\label{formbruta}
		\chi(F_0)= \sum_{\Gamma_+(f)\cap\Delta\neq\emptyset}(-1)^{\dim\Delta-1}\sum_{i=1}^{v(\Delta)}\mbox{Vol}_\mathbb{Z}(\Gamma^\Delta_i).
	\end{equation*}
Moreover, since $\Gamma_+(f)$ intersects all rays of $K(\mathcal{S})$, it follows that $\Gamma_+(f)\cap\Delta\neq\emptyset$ for any non-zero face $\Delta$ of $K(\mathcal{S})$.

On the other hand, for each $k$-dimensional face $\Delta$ of $\KS$, $1\leq k\leq n$, there exists a unique compact face of $\Gamma_+(f)\cap\Delta$ of dimension $k-1$. Indeed, by Lemma \ref{lemrays}, $\Delta$ is simplicial. Let $\{g_{i_1},\ldots,g_{i_k}\}$ be the ray generators of $\Delta$. Then, the only compact face of $\Gamma_+(f)\cap\Delta$ of dimension $k-1$ is the convex hull of $\{d_{i_1}g_{i_1},\ldots,d_{i_k}g_{i_k}\}$. 
	
	For each $1\leq k\leq n$, let $\{\Delta^{(k)}_i|1\leq i\leq \binom{n}{k}\}$ be the set of $k$-dimensional faces of $\KS$ (see Corollary \ref{corrays}). For each $\Delta^{(k)}_i$, let $\gamma^{\Delta^{(k)}_i}$ be the only compact face of $\Gamma_+(f)\cap\Delta^{(k)}_i$ of dimension $k-1$. Let $\Gamma^{\Delta^{(k)}_i}$ be the convex hull of $\gamma^{\Delta^{(k)}_i}\cup\{0\}$. Then, the previous formula can be rewritten as
	\begin{equation}\label{form1lapida}
		\chi(F_0)= \sum_{k=1}^{n}(-1)^{k-1}\sum_{i=1}^{\binom{n}{k}}\mbox{Vol}_\mathbb{Z}(\Gamma^{\Delta^{(k)}_i}).
	\end{equation}
Now we compute $\mbox{Vol}_\mathbb{Z}(\Gamma^{\Delta^{(k)}_i})$ (recall the discussion at the end of Section \ref{MT formula}).

If $k=1$, $\Gamma^{\Delta^{(1)}_i}$ are segments joining the origin to  $d_1e_1$ or $d_i(e_1+2e_i)$, $2\leq i\leq n$, in $\R^n$. Then
\[\mbox{Vol}_\Z(\Gamma^{\Delta^{(1)}_i})=d_i.\]
Let $2\leq k\leq n$. By the proof of Proposition \ref{propface}, there are two types of generators for a $k$-dimensional face of $\KS$, namely
\begin{equation}\label{tipo1}
	\{e_1,e_1+2e_{i_2},\dots,e_1+2e_{i_k}\}, \mbox{ for } 2\leq i_2<\dots<i_k\leq n
\end{equation}
and
\begin{equation}\label{tipo2}
	\{e_1+2e_{j_2},\dots,e_1+2e_{j_k}\}, \mbox{ for } 2\leq j_2<\dots<j_k\leq n
	\end{equation}
	Let $\Delta^{(k)}_\tau$ and $\Delta^{(k)}_{\tau'}$ be the faces generated by vectors of type (\ref{tipo1}) and (\ref{tipo2}), respectively. For the face $\Delta^{(k)}_\tau$ we have
	$$
	\mbox{Vol}_\Z(\Gamma^{\Delta^{(k)}_\tau})=\mbox{Vol}_\Z\left\{0,
	\begin{pmatrix}
		d_1\\
		0\\
		\vdots\\
		0\\
		0\\
		0\\
		\vdots\\
		0\\
		0\\
		0\\
		\vdots\\
		0\\
		0
	\end{pmatrix},
	\begin{pmatrix}
		d_{i_2}\\
		0\\
		\vdots\\
		0\\
		2d_{i_2}\\
		0\\
		\vdots\\
		0\\
		0\\
		0\\
		\vdots\\
		0\\
		0
	\end{pmatrix},\dots,
	\begin{pmatrix}
		d_{i_k}\\
		0\\
		\vdots\\
		0\\
		0\\
		0\\
		\vdots\\
		0\\
		2d_{i_k}\\
		0\\
		\vdots\\
		0\\
		0
	\end{pmatrix}\right\}.
	$$
On a appropriate lattice, using (\ref{vol3}) we obtain $\mbox{Vol}_\Z(\Gamma^{\Delta^{(k)}_\tau})=2^{k-1}d_1d_{i_2}\dots d_{i_k}$. On the other hand, for the face $\Delta^{(k)}_{\tau'}$ we have to compute the volume

	$$
	\mbox{Vol}_\Z(\Gamma^{\Delta^{(k)}_{\tau'}})=\mbox{Vol}_\Z\left\{0,
	\begin{pmatrix}
		d_{j_2}\\
		0\\
		\vdots\\
		0\\
		2d_{j_2}\\
		0\\
		\vdots\\
		0\\
		0\\
		0\\
		\vdots\\
		0\\
		0\\
		0\\
		\vdots\\
		0\\
		0
	\end{pmatrix},
	\begin{pmatrix}
		d_{j_3}\\
		0\\
		\vdots\\
		0\\
		0\\
		0\\
		\vdots\\
		0\\
		2d_{j_3}\\
		0\\
		\vdots\\
		0\\
		0\\
		0\\
		\vdots\\
		0\\
		0
	\end{pmatrix},\dots,
	\begin{pmatrix}
		d_{j_{k+1}}\\
		0\\
		\vdots\\
		0\\
		0\\
		0\\
		\vdots\\
		0\\
		0\\
		0\\
		\vdots\\
		0\\
		2d_{i_{k+1}}\\
		0\\
		\vdots\\
		0\\
		0
	\end{pmatrix}\right\}.
	$$
In contrast to the previous case, the lattice generated by $\Delta^{(k)}_{\tau'}$ is not contained in any coordinate subspace. However, in the proof of Proposition \ref{propface} we showed that this lattice is isomorphic to $\Z^k$ with base 
	\[
	\beta=\{e_1+2e_{j_2}, e_1+e_{j_2}+e_{j_3},\dots,e_1+e_{j_2}+e_{j_k}\}.
	\]
	Thus, expressing the generators of $\Delta^{(k)}_{\tau'}$ in terms of basis $\beta$, we obtain:
	\begin{itemize}
		\item for $d_{j_2}e_1+2d_{j_2}e_{j_2}$
		$$
		d_{j_2}e_1+2d_{j_2}e_{j_2} =d_{j_2}(e_1+2e_{j_2});
		$$
		\item for $d_{j_3}e_1+2d_{j_3}e_{j_3}$
		\begin{eqnarray*}
			d_{j_3}e_1+2d_{j_3}e_{j_3} & = & d_{j_3}(e_1+2e_{j_3})\\
			& = &d_{j_3}[-(e_1+2e_{j_2})+2(e_1+e_{j_2}+e_{j_3})]\\
			& =& -d_{j_3}(e_1+2e_{j_2}) +2d_{j_3}(e_1+e_{j_2}+e_{j_3});
		\end{eqnarray*}
		$\vdots$
		\item for $d_{j_{k+1}}e_1+2d_{j_{k+1}}e_{j_{k+1}}$
		\begin{eqnarray*}
			d_{j_{k+1}}e_1+2d_{j_{k+1}}e_{j_{k+1}} & = & d_{j_{k+1}}(e_1+2e_{j_{k+1}})\\
			& = & d_{j_{k+1}}[-(e_1+2e_{j_2})+2(e_1+e_{j_2}+e_{j_{k+1}})]\\
			& =& -d_{j_{k+1}}(e_1+2e_{j_2}) +2d_{j_{k+1}}(e_1+e_{j_2}+e_{j_{k+1}}).
		\end{eqnarray*}
	\end{itemize}
	Thus, using (\ref{vol3})
	\begin{align*}
		\mbox{Vol}_\Z(\Gamma^{\Delta^{(k)}_{\tau'}}) &=\mbox{Vol}_\Z\left\{0,
		\begin{pmatrix}
			d_{j_2}\\
			0\\
			0\\
			\vdots\\
			0\\
			0\\
			0
		\end{pmatrix},
		\begin{pmatrix}
			-d_{j_3}\\
			2d_{j_3}\\
			0\\
			\vdots\\
			0\\
			0\\
			0
		\end{pmatrix},\dots,
		\begin{pmatrix}
			d_{j_{k+1}}\\
			0\\
			0\\
			\vdots \\
			0\\
			0\\
			2d_{j_{k+1}}
		\end{pmatrix}\right\}\\
		&= 2^{k-1}d_{j_2}\dots d_{j_{k+1}}.    
	\end{align*}
	Therefore, substituting these volumes in (\ref{form1lapida}), we conclude
	$$
	\chi(F_0)=\sum_{k=1}^n(-1)^{k-1}2^{k-1}\sum_{1\leq i_1<\dots <i_k\leq n}^{}d_{i_1} \dots d_{i_k}.
	$$	
\end{proof}


\section{Local Euler obstruction and Milnor number} 

In this final section we deduce some consequences of Theorem \ref{differentds}. We compute the local Euler obstruction of $S^2_n$ and the Euler obstruction of the function $f$ appearing in Theorem \ref{differentds}. Moreover, we also relate the local Euler obstruction of $f$ to the Milnor number of a function defined on $\C^n$ associated to $f$.

Recall that the local Euler obstruction at a point $p \in X$ measures the obstruction to extending a radial vector field defined around $p$ to a non-vanishing vector field on $X$. On the other hand, the local Euler obstruction of a function $f$ with an isolated critical point, is a concept that extends the local Euler obstruction of $X$ by considering a different  vector field associated to $f$. For an overview on Euler obstruction, see \cite{B}.

\subsection{Local Euler obstruction}

We want to study the Euler characteristic of the Milnor fiber of generic linear forms on $S^2_n$. More generally, we have the following result.

\begin{prop}
Let $S^2_n\subset\mathbb{C}^N$ be a 2-generic symmetric determinantal variety. Let $f:S^2_n\longrightarrow\mathbb{C}$ be a non-degenerate polynomial function given by
	\[
	f(z)=\sum_{i=1}^{n}z_i+h(z),
	\]
where $h(z)$ is a polynomial function satisfying $\Gamma_+(h) \subset \Gamma_+(\sum_{i=1}^{n}z_i)$. Then the Euler characteristic of the Milnor fiber $F_0$ of $f$ at $0\in S_n^2$ is equal $1$ if $n$ is an odd number and $0$ if $n$ is an even number.
\end{prop}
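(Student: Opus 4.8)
The plan is to recognize this proposition as the specialization of Theorem \ref{differentds} to the case where all the exponents equal one. Indeed, the function $f(z)=\sum_{i=1}^n z_i + h(z)$ is precisely of the form treated in Theorem \ref{differentds}, with $\alpha_i=1$ and $d_i=1$ for every $1\leq i\leq n$, and the hypothesis $\Gamma_+(h)\subset\Gamma_+(\sum_{i=1}^n z_i)$ matches the polyhedron condition there. So the first (and essentially only) step is to invoke that theorem directly, with no further geometric work required.

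Once the formula of Theorem \ref{differentds} is in hand, I would substitute $d_i=1$. The inner sum $\sum_{1\leq i_1<\dots<i_k\leq n} d_{i_1}\cdots d_{i_k}$ then collapses to a count of $k$-element subsets of $\{1,\dots,n\}$, namely $\binom{n}{k}$. This reduces the Euler characteristic to the closed expression
\[
\chi(F_0)=\sum_{k=1}^n(-1)^{k-1}2^{k-1}\binom{n}{k}.
\]

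The remaining step is a routine binomial manipulation, which I would carry out via the binomial theorem. Rewriting the sum as $-\tfrac12\sum_{k=1}^n\binom{n}{k}(-2)^k$ and completing it to a full sum by adding the $k=0$ term, one uses $\sum_{k=0}^n\binom{n}{k}(-2)^k=(1-2)^n=(-1)^n$ to obtain
\[
\chi(F_0)=-\tfrac12\bigl((-1)^n-1\bigr)=\frac{1-(-1)^n}{2}.
\]
Evaluating this according to the parity of $n$ gives $\chi(F_0)=1$ when $n$ is odd and $\chi(F_0)=0$ when $n$ is even, as claimed.

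There is no substantial obstacle in this argument: all the genuine content lives in Theorem \ref{differentds}, and the present statement is a direct corollary obtained by a single specialization followed by an elementary binomial identity. The only point worth double-checking is that $f$ as written truly satisfies the hypotheses of Theorem \ref{differentds} (in particular that $f$ is non-degenerate and that the linear terms realize the minimal exponents along each ray), which is guaranteed by the stated assumptions.
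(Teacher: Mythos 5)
Your proposal is correct and follows essentially the same route as the paper: specialize Theorem \ref{differentds} to $d_i=1$, reduce the inner sums to $\binom{n}{k}$, and evaluate the alternating sum $\sum_{k=1}^n(-1)^{k-1}2^{k-1}\binom{n}{k}$. The only (cosmetic) difference is that the paper carries out this last step via the Bernstein polynomial identity $\sum_{i=0}^n\binom{n}{i}x^i(1-x)^{n-i}=1$ at $x=2$, treating odd and even $n$ separately, whereas your direct use of $(1-2)^n=(-1)^n$ handles both parities at once and is slightly cleaner.
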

\begin{proof}
By Theorem \ref{differentds}, 
we have
$$
\chi(F_0)=\binom{n}{1}-\binom{n}{2}2+\dots+(-1)^{n-2}\binom{n}{n-1}2^{n-2}+(-1)^{n-1}\binom{n}{n}2^{n-1}.
$$
On the other hand, by the Binomial Theorem,
$$
(1-2)^n= 1-\binom{n}{1}2+\binom{n}{2}2^2+\cdots+(-1)^{n-1}\binom{n}{n-1}2^{n-1}+(-1)^n2^n.
$$
Thus,
$$
\frac{1}{2}(1-(-1)^n)=\binom{n}{1}-\binom{n}{2}2+\dots+(-1)^{n-2}\binom{n}{n-1}2^{n-2}+(-1)^{n-1}\binom{n}{n}2^{n-1}.
$$

The result follows from this formula.
\end{proof}

Recall that, by Proposition \ref{propsim}, the singular locus of $S_n^2\subset\C^N$ is the generic symmetric determinantal variety $S_n^1$, which is just the origin. Then, a Whitney stratification of $S_n^2$ is the set $\{S_n^2\setminus\{0\}, \{0\}\}$. Moreover, given a generic linear form $l : (\mathbb{C}^N, 0) \to (\mathbb{C}, 0)$ with respect to $S_n^2$, we can assume that
	$$l(z) = \sum_{i=1}^{N} \alpha_i z_i$$
where $\alpha_i \neq 0$, for all $i=1,\dots,N$. In addition, without loss of generality, we can assume that the restriction of $l$ to $S_n^2$ is non-degenerate. Therefore the next result follows from \cite[Theorem 3.1]{BLS} and the previous proposition. 
	
\begin{cor}\label{corobs}
Let $S^2_n\subset\mathbb{C}^N$ be a 2-generic symmetric determinantal variety. Then, ${\rm Eu}_{S^2_n}(0)$ equals $1$ if $n$ is an odd number and $0$ if $n$ is an even number.
\end{cor}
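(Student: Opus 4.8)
The plan is to feed a generic linear form into the Brasselet--L\^e--Seade formula (Theorem \ref{BLS}) and then invoke the Euler-characteristic computation carried out in the preceding Proposition.

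First I would fix the Whitney stratification of $S_n^2$. By Proposition \ref{propsim} the singular locus of $S_n^2$ equals $S_n^1 = \{0\}$, so there are only two strata: the regular part $V_1 = S_n^2 \setminus \{0\} = X_{\mathrm{reg}}$ and the origin $V_0 = \{0\}$. The Brasselet--L\^e--Seade sum (ranging over all strata other than $\{0\}$) then collapses to a single term, giving
$${\rm Eu}_{S_n^2}(0) = \chi\big(V_1 \cap B_\varepsilon \cap l^{-1}(\delta)\big)\cdot {\rm Eu}_{S_n^2}(V_1)$$
for a generic linear form $l$. Here ${\rm Eu}_{S_n^2}(V_1) = 1$ because $V_1$ is the smooth locus, and since $\delta \neq 0 = l(0)$ the slice $V_1 \cap l^{-1}(\delta)$ misses the origin, so it equals $S_n^2 \cap B_\varepsilon \cap l^{-1}(\delta)$, that is, the Milnor fiber $F_0$ of $l$. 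Thus the whole computation reduces to ${\rm Eu}_{S_n^2}(0) = \chi(F_0)$.

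Next I would check that $l$, restricted to $S_n^2$, falls into the class treated by the preceding Proposition. Splitting $l = \sum_{i=1}^N \alpha_i z_i = \sum_{i=1}^n \alpha_i z_i + h(z)$, where $h$ gathers the variables attached to the non-ray generators $e_1 + e_i + e_j$ with $2 \le i < j \le n$, I must verify $\Gamma_+(h) \subset \Gamma_+(\sum_{i=1}^n z_i)$. This is where the explicit ray structure does the work: the identity $e_1 + e_i + e_j = \tfrac12(e_1 + 2e_i) + \tfrac12(e_1 + 2e_j)$ from Lemma \ref{lemrays} exhibits each non-ray generator as a convex combination of the two ray generators $m_i$ and $m_j$, hence as a point of $\Gamma_+(\sum_{i=1}^n z_i)$; the nonzero coefficients $\alpha_i$ play no role in Newton polyhedra.

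Finally I would apply the Proposition with all exponents $d_i = 1$, which yields $\chi(F_0) = 1$ for odd $n$ and $\chi(F_0) = 0$ for even $n$, exactly the asserted value of ${\rm Eu}_{S_n^2}(0)$. The only genuinely substantive step is the verification that the generic linear form restricts to an admissible function --- its Newton polyhedron meeting every ray, with the remaining terms subordinate to $\Gamma_+(\sum_{i=1}^n z_i)$ --- while everything else is the bookkeeping of specializing Theorem \ref{BLS} to a two-stratum space and recognizing the surviving Euler characteristic as that of the Milnor fiber.
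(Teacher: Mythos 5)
Your proposal is correct and follows the paper's own route essentially verbatim: the two-stratum stratification $\{S_n^2\setminus\{0\},\{0\}\}$ collapses the Brasselet--L\^e--Seade formula to ${\rm Eu}_{S_n^2}(0)=\chi(F_0)$ for a generic linear form $l$, and the preceding Proposition (the $d_i=1$ case of Theorem \ref{differentds}) evaluates $\chi(F_0)$ as $1$ or $0$ according to the parity of $n$. Two small bookkeeping points: the non-ray generators comprise not only $e_1+e_i+e_j$ with $2\le i<j\le n$ but also $e_1+e_i$ for $2\le i\le n$, which lie in $\Gamma_+\bigl(\sum_{i=1}^n z_i\bigr)$ by the other identity of Lemma \ref{lemrays}, namely $e_1+e_i=\tfrac{1}{2}e_1+\tfrac{1}{2}(e_1+2e_i)$, so your convexity argument covers them as well; and since non-degeneracy of $l|_{S_n^2}$ is a hypothesis of the Proposition, your write-up should also record (as the paper does) that this may be assumed without loss of generality for a generic linear form.
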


\begin{obs}
More generally, the local Euler obstruction of generic symmetric determinantal varieties was computed in \cite{Raicu}. Their approach is to study the structure of the invariant de Rham complex and character formulas for simple equivariant $\mathcal{D}$-modules.
\end{obs}

As a direct consequence of Theorem \ref{differentds}, Corollary \ref{corobs} and \cite[Theorem 3.1]{BMPS}, we obtain

\begin{cor}\label{coreulerf}
Let $f:S^2_n\longrightarrow\mathbb{C}$ be as in the Theorem \ref{differentds}. Then the local Euler obstruction of $f$ is
\[
Eu_{f,S_n^2}(0)= 1 -\sum_{k=1}^n(-1)^{k-1}2^{k-1}\sum_{1\leq i_1<\dots <i_k\leq n}^{}d_{i_1} \dots d_{i_k}
\]
if $n$ is an odd number, and
\[
Eu_{f,S_n^2}(0)= - \sum_{k=1}^n(-1)^{k-1}2^{k-1}\sum_{1\leq i_1<\dots <i_k\leq n}^{}d_{i_1} \dots d_{i_k}
\]
if $n$ is an even number.
\end{cor}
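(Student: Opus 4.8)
The plan is to combine the three results cited just before the corollary. By Theorem \ref{BMPS}, since $S_n^2$ has an isolated singularity at the origin, its Whitney stratification consists of only two strata, namely $V_1 = S_n^2 \setminus \{0\}$ and $V_0 = \{0\}$. The formula of Theorem \ref{BMPS} therefore collapses: the index $i$ ranges over the non-origin strata, so only $V_1$ contributes, and $\mathrm{Eu}_{S_n^2}(V_1) = 1$ because $V_1 = (S_n^2)_{\mathrm{reg}}$ is the smooth locus where the local Euler obstruction is identically $1$.

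With this simplification the identity of Theorem \ref{BMPS} reads
\begin{equation*}
\mathrm{Eu}_{f,S_n^2}(0) = \mathrm{Eu}_{S_n^2}(0) - \chi\big(V_1 \cap B_\varepsilon \cap f^{-1}(\delta)\big).
\end{equation*}
The second term is exactly the Euler characteristic of the Milnor fiber $F_0$ of $f$ at the origin, since $f^{-1}(\delta)$ avoids the origin for small generic $\delta$ and hence meets only the stratum $V_1$; thus $\chi(V_1 \cap B_\varepsilon \cap f^{-1}(\delta)) = \chi(F_0)$. Next I would invoke Theorem \ref{differentds}, which applies because $f$ is assumed to have the form stated there and to have an isolated critical point relative to $\mathcal{V}_{\mathcal{S}}$, to substitute the explicit value
\begin{equation*}
\chi(F_0) = \sum_{k=1}^n(-1)^{k-1}2^{k-1}\sum_{1\leq i_1<\dots<i_k\leq n} d_{i_1}\cdots d_{i_k}.
\end{equation*}

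Finally I would split into the two parity cases by feeding in the value of $\mathrm{Eu}_{S_n^2}(0)$ from Corollary \ref{corobs}: this obstruction equals $1$ when $n$ is odd and $0$ when $n$ is even. Substituting $\mathrm{Eu}_{S_n^2}(0) = 1$ into the displayed identity yields the first claimed formula, and substituting $\mathrm{Eu}_{S_n^2}(0) = 0$ yields the second, where the overall minus sign simply reflects the absence of the $+1$ term. Since every ingredient is already established, there is essentially no obstacle here; the only point requiring care is the bookkeeping that identifies the summand $\chi(V_1 \cap B_\varepsilon \cap f^{-1}(\delta))$ appearing in Theorem \ref{BMPS} with the Milnor fiber computed in Theorem \ref{differentds}, which is immediate once one observes that the two-stratum structure forces all the topology to live on the regular part.
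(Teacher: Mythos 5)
Your proposal is correct and follows exactly the derivation the paper intends: the paper states this corollary as a direct consequence of Theorem \ref{BMPS}, Theorem \ref{differentds} and Corollary \ref{corobs}, and indeed later (in the proof of the Milnor-number proposition) writes the same collapsed identity $Eu_{f,S_n^2}(0)=Eu_{S_n^2}(0)-\chi(F_0)$ that you derive from the two-stratum structure. Your bookkeeping — identifying $\chi(V_1\cap B_\varepsilon\cap f^{-1}(\delta))$ with $\chi(F_0)$ and using $\mathrm{Eu}_{S_n^2}(V_1)=1$ on the regular stratum — is precisely the argument the paper leaves implicit.
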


\subsection{Relation to Milnor number}

As a final application of the formulas we established earlier, we relate the local Euler obstruction of a function with isolated critical points defined on $S^2_n\subset\C^N$ to the Milnor number of a function defined on $\C^n$.

Let us recall that if $h:(\mathbb{C}^n, 0) \to (\mathbb{C}, 0)$ is an analytic function with isolated critical point at the origin, then $$ Eu_{h,{\mathbb{C}^n}}(0) = (-1)^n \mu(h),$$ where $\mu(h)$ denotes the Milnor number of $h$ \cite[Remark 3.4]{BMPS}. However, this equality is not necessarily true for arbitrary germs of complex analytic spaces.

In the following result we establish a relation among the local Euler obstruction of the function $f:S^2_n\to\C$ of Theorem \ref{differentds}, and the Milnor number of the function $g:\mathbb{C}^n \to \mathbb{C}$ given by 
	\[
	g(x)= x_1^{d_1} + \sum_{i=2}^{n}x_i^{2d_i},\,\,\, \mbox{ where }\,\,\, d_1\geq2.
	\]

\begin{prop}
With the previous notation,
\[
Eu_{f,S_n^2}(0)= - \mu(g) + \sum_{i=2}^{n} d_i +  \sum_{k=2}^{n}(-1)^{k-1}2^{k-1}\sum_{2\leq j_2<\dots <j_k\leq n}^{}d_{j_2} \dots d_{j_k},
\]
if $n$ is odd, and 
\[
Eu_{f,S_n^2}(0)= - 1 + \mu(g) + \sum_{i=2}^{n} d_i +  \sum_{k=2}^{n}(-1)^{k-1}2^{k-1}\sum_{2\leq j_2<\dots <j_k\leq n}^{}d_{j_2} \dots d_{j_k},\]
if $n$ is even. 
\end{prop}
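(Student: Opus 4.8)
The plan is to reduce everything to the elementary symmetric polynomials $e_k(d_1,\dots,d_n)$ and to exploit the generating-function identity $\prod_i(1-2d_i)=\sum_k(-2)^k e_k$. I would start from Corollary~\ref{coreulerf}, which already expresses $Eu_{f,S_n^2}(0)$ as $1-\chi(F_0)$ when $n$ is odd and as $-\chi(F_0)$ when $n$ is even, where by Theorem~\ref{differentds} one has $\chi(F_0)=\sum_{k=1}^n(-1)^{k-1}2^{k-1}e_k(d_1,\dots,d_n)$. In this way the proposition becomes a purely algebraic identity among symmetric functions in $d_1,\dots,d_n$.

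First I would isolate the variable $d_1$ through the splitting $e_k(d_1,\dots,d_n)=d_1\,e_{k-1}(d_2,\dots,d_n)+e_k(d_2,\dots,d_n)$. Substituting this into $\chi(F_0)$ and reindexing the first group by $j=k-1$ collapses it, via the generating-function identity applied to $d_2,\dots,d_n$, to $d_1\sum_{j=0}^{n-1}(-2)^j e_j(d_2,\dots,d_n)=d_1\prod_{i=2}^n(1-2d_i)$. The remaining group is exactly the quantity $\Phi:=\sum_{i=2}^n d_i+\sum_{k=2}^n(-1)^{k-1}2^{k-1}\sum_{2\le j_2<\dots<j_k\le n}d_{j_2}\cdots d_{j_k}$ appearing in the statement (with the inner sum read as the $k$-th elementary symmetric function of $d_2,\dots,d_n$, so that the terms with $k\ge n$ vanish). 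Writing $P:=\prod_{i=2}^n(1-2d_i)$, this produces the clean decomposition $\chi(F_0)=d_1P+\Phi$.

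Next I would compute the Milnor number. Since $g$ is a Brieskorn--Pham polynomial with exponents $d_1,2d_2,\dots,2d_n$, all at least $2$ because $d_1\ge2$ and $d_i\ge1$, the classical formula gives $\mu(g)=(d_1-1)\prod_{i=2}^n(2d_i-1)=(-1)^{n-1}(d_1-1)P$. Applying the same generating-function identity once more, now to the $n-1$ variables $d_2,\dots,d_n$, yields $\Phi=\tfrac12(1-P)$, i.e. $P=1-2\Phi$; this relation is the bridge that lets me eliminate $P$ from the decomposition above.

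Finally I would assemble these relations, separating the two parities. For $n$ odd, $\mu(g)=(d_1-1)P$ gives $d_1P=\mu(g)+P$, whence $\chi(F_0)=\mu(g)+1-\Phi$ and $Eu_{f,S_n^2}(0)=1-\chi(F_0)=-\mu(g)+\Phi$; for $n$ even, $\mu(g)=-(d_1-1)P$ gives $d_1P=P-\mu(g)$, whence $\chi(F_0)=1-\Phi-\mu(g)$ and $Eu_{f,S_n^2}(0)=-\chi(F_0)=-1+\mu(g)+\Phi$, which are precisely the two claimed formulas. The only genuine subtlety — and the step most prone to sign slips — is keeping the parity factor $(-1)^{n-1}$ in $\mu(g)$ consistent with the parity-dependent output of Corollary~\ref{coreulerf}; everything else is routine bookkeeping with elementary symmetric polynomials.
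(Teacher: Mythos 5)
Your proposal is correct, and it reaches the result by a genuinely different route than the paper. The paper never invokes the Brieskorn--Pham product formula: instead it runs the Matsui--Takeuchi volume computation a second time, now on the toric variety $\mathbb{C}^n$ with cone $\mathbb{R}^n_{\geq 0}$, to compute $\chi(G_0)$ for the Milnor fiber $G_0$ of $g$; it then observes, by regrouping the volume terms, that $\chi(G_0)=\chi(F_0)+\Phi$ (with $\Phi$ exactly your quantity), converts to $\mu(g)$ via the classical relation $\chi(G_0)=1+(-1)^{n-1}\mu(g)$, and finishes with Theorem \ref{BMPS} and Corollary \ref{corobs}. You bypass the Milnor fiber of $g$ entirely: you take $\mu(g)=(d_1-1)\prod_{i=2}^n(2d_i-1)$ as known and replace the second geometric computation by pure symmetric-function algebra, namely $\chi(F_0)=d_1P+\Phi$ and $P=1-2\Phi$ with $P=\prod_{i=2}^n(1-2d_i)$. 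The two routes carry the same content --- indeed $d_1P+2\Phi=1+(d_1-1)P$ recovers the paper's identity $\chi(G_0)=1+(-1)^{n-1}\mu(g)$ --- but yours is shorter and makes the parity bookkeeping transparent through the single sign in $\mu(g)=(-1)^{n-1}(d_1-1)P$, at the cost of importing the Brieskorn--Pham formula; the paper's version stays uniform with its toric methodology (the same machine computes both $\chi(F_0)$ and $\chi(G_0)$) and exhibits the geometric relation $\chi(G_0)=\chi(F_0)+\Phi$ between the two Milnor fibers, which is arguably the conceptual point of the proposition. A further merit of your write-up is the parenthetical where you read the inner sum as the $k$-th elementary symmetric function $e_k(d_2,\dots,d_n)$, i.e.\ over $k$ indices $2\leq j_1<\dots<j_k\leq n$: this is what the paper's own computation produces and what makes the formula true (a check at $n=2$ confirms it), whereas the statement's literal subscripting $j_2<\dots<j_k$ suggests $k-1$ indices and is a typo.
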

\begin{proof} 
To begin with, we compute the Euler characteristic of the Milnor fiber of $g$ at $0\in\C^n$. Using that $\C^n$ is a toric variety defined by the semigroup $\N^n$, with associated cone $\R^n_{\geq0}$, the computation of the Euler characteristic is done exactly as in the proof of Theorem \ref{differentds}. In order not to repeat the proof, we only summarize the main steps in this context. 

First notice that $\Delta\cap \Gamma_+(g) \neq \emptyset$ for all positive-dimensional faces $\Delta$ of the cone $\R^n_{\geq0}$. Hence
	\begin{equation}\label{formgen2}
		\chi(G_0)=\sum_{k=1}^n(-1)^{k-1}\sum_{i=1}^{\binom{n}{k}}\mbox{Vol}_\Z(\Gamma^{\Delta^{(k)}_i}).   
	\end{equation}    

Let us compute $\mbox{Vol}_\Z(\Gamma^{\Delta^{(k)}_i})$ for each $1\leq k\leq n$. Firstly, since $\Gamma^{\Delta^{(1)}_i}$ are segments joining the origin in $\R^n$ to  $d_1e_1$ or $2d_ie_i$, $2\leq i\leq n$, then
\[
\mbox{Vol}_\Z(\Gamma^{\Delta^{(1)}_1})=d_1 \ \ \text{and} \ \ \mbox{Vol}_\Z(\Gamma^{\Delta^{(1)}_i})=2d_i, \ \ 2\leq i \leq n.\]

Now let $2\leq k\leq n$. A $k$-dimensional face of $\R^n_{\geq0}$ can be generated by a set of the form
	\begin{equation}\label{tipo11}
		\{e_1,e_{i_2},\dots,e_{i_k}\}, \mbox{ for } 2\leq i_2<\dots<i_k\leq  n
	\end{equation}
or
	\begin{equation}\label{tipo21}
		\{e_{j_1},\dots,e_{j_k}\}, \mbox{ for } 2\leq j_1<\dots<j_k\leq n.
	\end{equation}
Let $\Delta^{(k)}_\tau$ and $\Delta^{(k)}_{\tau'}$ be the faces generated by vectors of the form (\ref{tipo11}) and (\ref{tipo21}), respectively. Then, $\mbox{Vol}_\Z(\Gamma^{\Delta^{(k)}_\tau})=2^{k-1}d_1d_{i_2}\dots d_{i_k}$ and $\mbox{Vol}_\Z(\Gamma^{\Delta^{(k)}_{\tau'}}) = 2^{k}d_{j_1}\dots d_{j_{k}}=2\cdot 2^{k-1}d_{j_1}\dots d_{j_{k}}.$

Substituting these volumes in (\ref{formgen2}), we have
\begin{align*}
\chi(G_0)=d_1 + 2 \cdot \sum_{i=2}^{n} d_i &+\sum_{k=2}^{n}(-1)^{k-1}2^{k-1}\sum_{2\leq i_2<\dots <i_k\leq n}^{}d_{1} d_{i_2} \dots d_{i_k}\\
&+2\sum_{k=2}^{n}(-1)^{k-1}2^{k-1}\sum_{2\leq j_1<\dots <j_k\leq n}^{}d_{j_1} \dots d_{j_k}.
\end{align*}
This can be rewritten as
\begin{align*}
\chi(G_0)= \Big(d_1 + \sum_{i=2}^{n} d_i &+\sum_{k=2}^{n}(-1)^{k-1}2^{k-1}\sum_{2\leq i_2<\dots <i_k\leq n}^{}d_{1} d_{i_2} \dots d_{i_k}\\
&+\sum_{k=2}^{n}(-1)^{k-1}2^{k-1}\sum_{2\leq j_1<\dots <j_k\leq n}^{}d_{j_1} \dots d_{j_k}\Big)\\ 
&+ \sum_{i=2}^{n} d_i +  \sum_{k=2}^{n}(-1)^{k-1}2^{k-1}\sum_{2\leq j_1<\dots <j_k\leq n}^{}d_{j_1} \dots d_{j_k}.
\end{align*}
Thus, Theorem \ref{differentds} implies
$$\chi(G_0)= \chi(F_0)+ \sum_{i=2}^{n} d_i +  \sum_{k=2}^{n}(-1)^{k-1}2^{k-1}\sum_{2\leq j_1<\dots <j_k\leq n}^{}d_{j_1} \dots d_{j_k}.
$$ 
On the other hand, it is well-known that 
$$
\chi(G_0) = 1 + (-1)^{n-1} \mu(g).
$$ 
Therefore,
$$\chi(F_0)=  1 + (-1)^{n-1} \mu(g) - \big( \sum_{i=2}^{n} d_i +  \sum_{k=2}^{n}(-1)^{k-1}2^{k-1}\sum_{2\leq j_2<\dots <j_k\leq n}^{}d_{j_2} \dots d_{j_k}\big).
$$ 
Now, since $S_n^2$ has an isolated singularity at the origin, by applying \purple{\cite[Theorem 3.1]{BMPS}}, we obtain
\[
Eu_{f,S_n^2}(0)= Eu_{S_n^2}(0) - \chi(F_0).\]
The result follows from Corollary \ref{corobs}.
\end{proof}

\section*{Acknowledgements}

The first and third authors would like to thank for the great hospitality received from Centro de Ciencias Matem\'aticas, UNAM Campus Morelia, during the visit in which this project started.

The authors express their heartfelt gratitude, in loving memory, to Professor Maria Aparecida Soares Ruas for being the generous bridge that made this collaboration possible and for her unwavering encouragement of our work. Her contagious enthusiasm and passion for knowledge opened the doors to the fascinating world of determinantal varieties, which she introduced to us with brilliance and sensitivity. Her presence remains alive in every discovery and every step we take along this scientific journey.

\bibliography{ref}

\vspace{.5cm}
\noindent{\footnotesize \textsc {T. M. Dalbelo, Universidade Federal de Sao Carlos.} \\
Email: thaisdalbelo@ufscar.br}\\
\noindent{\footnotesize \textsc {D. Duarte, Centro de Ciencias Matem\'aticas, UNAM.} \\
E-mail: adduarte@matmor.unam.mx}\\
\noindent{\footnotesize \textsc {D. da N\'obrega Santos, Universidade Federal Rural de Pernambuco.} \\
Email: danilo.nobrega@ufrpe.br}

\end{document}